\def\ams{1}
\numberwithin{equation}{section}
\numberwithin{table}{section}
\numberwithin{figure}{section}
\theoremstyle{plain}
\newtheorem{theorem}{Theorem}[section]
\newtheorem{lemma}[theorem]{Lemma}
\newtheorem{proposition}[theorem]{Proposition}
\newtheorem{corollary}[theorem]{Corollary}
\newtheorem{conjecture}[theorem]{Conjecture}
\theoremstyle{definition}
\newtheorem{definition}{Definition}[section]
\newtheorem{assumption}[definition]{Assumption}
\newtheorem{convention}[definition]{Convention}
\newtheorem{example}[definition]{Example}
\theoremstyle{remark}
\newtheorem*{remark}{Remark}
\newtheorem*{note}{Note}
\newtheorem*{claim}{Claim}
\newcommand{\Ddata}{D}
\newcommand{\Ndata}{N}
\newcommand{\Realdom}{\mathbf{R}}
\newcommand{\one}{\mathbf{1}}
\newcommand{\normal}{\mathcal{N}}
\newcommand{\OU}{$\mathcal{OU}$}
\DeclareMathOperator{\Expect}{\mathbb{E}}
\DeclareMathOperator{\Cov}{\mathbb{C}ov}
\DeclareMathOperator{\KL}{D_{KL}}
\DeclareMathOperator{\diag}{\text{Diag}}
\renewcommand{\Pr}{\mathbf{P}}
\newcommand{\updated}[1]{#1}
\theoremstyle{plain}
\newtheorem{theorem}{Theorem}[section]
\newtheorem{lemma}[theorem]{Lemma}
\newtheorem{proposition}[theorem]{Proposition}
\theoremstyle{definition}
\newtheorem{definition}{Definition}[section]
\newtheorem{assumption}[definition]{Assumption}
\theoremstyle{remark}
\newtheorem*{remark}{Remark}
\newcommand{\Ddata}{D}
\newcommand{\Ndata}{N}
\newcommand{\Realdom}{\mathbf{R}}
\newcommand{\one}{\mathbf{1}}
\newcommand{\normal}{\mathcal{N}}
\newcommand{\OU}{$\mathcal{OU}$}
\DeclareMathOperator{\Expect}{\mathbb{E}}
\DeclareMathOperator{\Cov}{\mathbb{C}ov}
\DeclareMathOperator{\KL}{D_{KL}}
\DeclareMathOperator{\diag}{\text{Diag}}
\renewcommand{\Pr}{\mathbf{P}}
\newcommand{\updated}[1]{\textcolor{red}{#1}}
\begin{document}

\if\ams1
\title[Bayesian inference in Epidemics]{Bayesian inference in
  Epidemics: linear noise analysis}

\author[S. Bronstein]{Samuel Bronstein$^1$}
\address{$^1$Department of Mathematics and Applications, ENS Paris,
  75005 Paris, France.}

\author[S. Engblom]{Stefan Engblom$^2$}
\thanks{Corresponding author: S. Engblom, telephone +46-18-471 27 54,
  fax +46-18-51 19 25, URL: \url{http://user.it.uu.se/~stefane}.}

\author[R. Marin]{Robin Marin$^2$}
\address{$^2$Division of Scientific Computing \\
  Department of Information Technology \\
  Uppsala University \\
  SE-751 05 Uppsala, Sweden.}

\email{samuel.bronstein@ens.fr, stefane@it.uu.se, robin.marin@it.uu.se}

\date{\today}

%
%
%

\subjclass[2010]{Primary: 60J70, 62F12, 62F15; Secondary: 65C30, 65C60, 92D30}
\keywords{Parameter estimation; Bayesian modeling; Stochastic
  epidemiological models; Network model; Ornstein-Uhlenbeck process}

\begin{abstract}
  This paper offers a qualitative insight into the convergence of
  Bayesian parameter inference in a setup which mimics the modeling of
  the spread of a disease with associated disease
  measurements. Specifically, we are interested in the Bayesian
  model's convergence with increasing amounts of data under
  measurement limitations. Depending on how weakly informative the
  disease measurements are, we offer a kind of `best case' as well as
  a `worst case' analysis where, in the former case, we assume that
  the prevalence is directly accessible, while in the latter that only
  a binary signal corresponding to a prevalence detection threshold is
  available. Both cases are studied under an assumed so-called linear
  noise approximation as to the true dynamics. Numerical experiments
  test the sharpness of our results when confronted with more
  realistic situations for which analytical results are unavailable.
\end{abstract}

\maketitle

\else

\title{Bayesian inference in epidemics: linear noise analysis}

\author{%
  Samuel Bronstein\affil{1},
  Stefan Engblom\affil{2,}\corrauth
  and
  Robin Marin\affil{2}
}

\shortauthors{the Author(s)}

\address{%
  \addr{\affilnum{1}}{Department of Mathematics and Applications, ENS Paris,
    75005 Paris, France.}
  \addr{\affilnum{2}}{Division of Scientific Computing, Department of
    Information Technology, Uppsala University, SE-751 05 Uppsala, Sweden.}}

\corraddr{stefane@it.uu.se; Tel: +46-18-471 27 54; Fax:\\ +46-18-51 19 25.}

\begin{abstract}
  This paper offers a qualitative insight into the convergence of
  Bayesian parameter inference in a setup which mimics the modeling of
  the spread of a disease with associated disease
  measurements. Specifically, we are interested in the Bayesian
  model's convergence with increasing amounts of data under
  measurement limitations. Depending on how weakly informative the
  disease measurements are, we offer a kind of `best case' as well as
  a `worst case' analysis where, in the former case, we assume that
  the prevalence is directly accessible, while in the latter that only
  a binary signal corresponding to a prevalence detection threshold is
  available. Both cases are studied under an assumed so-called linear
  noise approximation as to the true dynamics. Numerical experiments
  test the sharpness of our results when confronted with more
  realistic situations for which analytical results are unavailable.
\end{abstract}

\keywords{Parameter estimation; Bayesian modeling; Stochastic
  epidemiological models; Network model; Ornstein-Uhlenbeck process.}

\maketitle

\fi


\section{Introduction}

Computational models in epidemics are commonly relied upon to estimate
the disease spread at fairly large spatial- and temporal scales, often
referred to as \emph{scenario generation}. With the increasing volumes
and improved resolution of data from, e.g., mobile apps and disease
testing time series from hospitals and nursing homes, predictive
data-driven models formed from first principles are within reach. The
accuracy of such models is ultimately limited by the specifics of the
available disease surveillance data. In this paper we attempt to gain
a qualitative understanding of how Bayesian inference of
epidemiological parameters may be expected to perform and what the
limiting factors are.

%

Epidemiological models are typically formed by postulating laws for
the flow of individuals between different \emph{compartments} in a
large population and have been studied in this form for a long
time. When connected with data in the form of observations, the
associated inference problem is also a fairly mature field, see,
e.g., \cite{keeling2011modeling, mckinley2009inference,
  andersson2012stochastic}. With the increasing qualities and
quantities of data, the various incarnations of data-driven modeling
allow for substantially higher modeling resolution compared to
traditional macroscopic approaches \cite{eubank2004modelling,
  ferguson2005strategies, balcan2009multiscale,
  merler2011determinants, Brooks-Pollock2014ABCSMC}. For example,
individual-level contact tracing has been used to study disease spread
models at various population sizes \cite{stehle2011simulation,
  bajardi2012optimizing, salathe2010high, obadia2015detailed,
  toth2015role}. Data-driven models have allowed epidemic and endemic
conditions to be investigated at a level of detail not previously
possible \cite{zhang2017spread, liu2018measurability, siminf3}.

The identification of the epidemiological parameters from data falls
under the scope of problems formally studied in \emph{System
  Identification}~\cite{soderstrom1989system}. However, a `system'
viewpoint of epidemiological modeling is not yet standard, and
identification of parameters is rather more often approached through
calibration of residuals~\cite{ferguson2005strategies, siminf3,
  fournie2018dynamic}, sometimes also blending in aspects of Bayesian
arguments. Fully Bayesian approaches are rarer, albeit with some
exceptions~\cite{Brooks-Pollock2014ABCSMC, engblom2020bayesian},
typically due to the technical difficulties with formulating suitable
(pseudo-)likelihoods and the slow convergence associated with the
conditioning of the problem. Although Bayesian inference is notably
well-posed thanks to its use of prior distributions, the posterior
distribution itself is often a computationally ill-conditioned object
whenever strong parameter correlations are present, e.g., resulting
from nearly singular maps from parameters to observables. These
conditions, together with the societal importance of this modeling
domain, make it relevant to reason around the limits of fully Bayesian
techniques.

Fundamental questions concerning Bayesian convergence in general
settings, including infinite-dimensional ones, have been treated
\cite{shen2001rates, stuart2010inverse}, and have also been revisited
with specific tools and applications in mind, e.g., Gaussian processes
and other machine learning algorithms \cite{stuart2018posterior,
  latz2020well}. ``Brittleness'' \cite{owhadi2015brittleness,
  sprungk2020local}, or high sensitivity to small perturbations have
been proposed to be a problematic phenomenon under certain
conditions. For applications, this points to the importance of
striking a balance between the granularity of the model and the
information content and level of detail of the available data.

With the specific aim of reaching qualitative conclusions for Bayesian
modeling in epidemiology, we will connect some usual modeling
approaches in infectious disease spread with a basic linear stochastic
differential equation. We analyze the pre-asymptotic Bayesian
posterior in the high-quality data regime as well as the estimate's
convergence under weakly informative disease measurements. This is
motivated by current trends in disease spread monitoring through for
example sewage water analysis and symptoms data collection using
smartphones \cite{wwGreece, CSSSWhite}.

The rest of the paper is organized as follows. In \S\ref{sec:SDEmeta}
we suggest the Ornstein-Uhlenbeck process as a meta-model of
epidemiological models, covering various
\updated{Susceptible-Infected-Recovered (SIR)-type models} locally in
time, during endemic as well as under epidemic conditions. In
\S\ref{sec:UnfilteredState} we briefly analyze the posterior
convergence when accurate full state measurements are available. The
main results are found in \S\ref{sec:FilteredState} where we more
fully develop an analysis for the case of poorly informative data. We
offer some examples of relevance in \S\ref{sec:Illustrations} and a
summarizing discussion is found in \S\ref{sec:conclusions}.


\section{The Ornstein-Uhlenbeck process}
\label{sec:SDEmeta}

\updated{We connect in this section the Ornstein-Uhlenbeck process
  with some basic epidemiological models via linearization and quasi
  steady-state arguments. We also briefly discuss, by means of a
  backward analysis, how continuous and discrete time in this setting
  can be connected in probability law under a certain parameter map.}

\subsection{\updated{Disease spread modeling}}

Epidemiological modeling typically involves ordinary differential
equations (ODEs), e.g., the SIR-model which is often used to model the
annual flu \cite{kermack1927contribution, keeling2011modeling},
\begin{align}
  \label{eq:SIR}
  \left. \begin{array}{rl}
           S'(t) &= -(\beta \Sigma^{-1}) S(t) I(t) \\
           I'(t) &= (\beta \Sigma^{-1}) S(t) I(t) - \gamma I(t) \\
           R'(t) &= \gamma I(t)
  \end{array} \right\}
\end{align}
in terms of Susceptible, Infected, and Recovered individuals,
respectively, and for a total population size $\Sigma \equiv
S+I+R$. The model parameters $\beta$ and $\gamma$ define the
transmission and recovery rates, and the basic reproduction number is
then given by $R_0 = \beta/\gamma$, that is, the expected number of
new infections resulting from a single index case
\cite{keeling2011modeling}.

Simplifying the SIR-model by removing the $R$-compartment one obtains
the SIS-model, \updated{where the recovered state has been removed and
  hence effectively identified with the susceptible state,}
\begin{align}
  \label{eq:SIS}
  \left. \begin{array}{rl}
           S'(t) &= -(\beta \Sigma^{-1})  S(t) I(t) + \gamma I(t) \\
           I'(t) &= (\beta \Sigma^{-1}) S(t) I(t) - \gamma I(t) \\
  \end{array} \right\}
\end{align}
and with the same $R_0$ as the SIR-model. It is sometimes useful to
add an environmental compartment expressing the \emph{infectious
  pressure} $\varphi$, i.e., the amount of infectious substance per
unit of space. This defines the SIS$_{\text{E}}$-model
\cite{siminf_Ch}, \updated{where the $E$ stands for the Environmental
  compartment,}
\begin{align}
  \label{eq:SIS_E}
  \left. \begin{array}{rl}
           S'(t) &= -\beta  S(t) \varphi(t) + \gamma I(t) \\
           I'(t) &= \beta S(t) \varphi(t) - \gamma I(t) \\
           \varphi'(t) &= \Sigma^{-1} I(t) -\rho \varphi(t)
  \end{array} \right\}
\end{align}
where the indicated governing equation for $\varphi$ is just a basic
example. The SIS$_{\text{E}}$-model is convenient to adapt to spread
over a network and to detection situations involving sampling the
environment \cite{engblom2020bayesian}. Due to the indirect
transmission the basic reproduction number now scales as a square
root, with $R_0 = \sqrt{\beta/(\gamma\rho)}$.

The use of ODEs can be justified for epidemiological models in
sufficiently large populations. In smaller populations, e.g., networks
of small communities, stochastic variants are necessary to properly
capture the underlying dynamics of the spread
\cite{britton2010stochastic, andersson2012stochastic}. For example, a
\updated{stochastic differential equation (SDE)-version} of
\eqref{eq:SIS} was analyzed in \cite{gray2011stochastic} which
essentially replaces $\beta \, dt$ by a Brownian diffused version
$\beta \, dt+\eta \, dB(t)$. This has the effect of lowering the basic
reproduction number to $R_0 = (\beta-\eta^2/2)/\gamma$. A
first-principle stochastic approach is to rather express the dynamics
as a continuous-time discrete state Markov chain (CTMC). The SIS-model
above is then defined via discrete transitions with exponentially
distributed waiting times,
\begin{align}
  \label{eq:ctmc}
  \left. \begin{array}{rcl}
           S + I &\xrightarrow{\beta\Sigma^{-1}} & 2 I \\
           I     &\xrightarrow{\gamma}& S
         \end{array} \right\}
\end{align}
meaning, e.g., that one infectious individual may infect one
susceptible individual, and such that $\beta$ and $\gamma$ are now
understood as rate parameters in the driving Poissonian processes. One
can show that now $R_0 = (1-\Sigma^{-1}/2) \times \beta/\gamma$ such
that consistency with the previous SDE formulation would require that
$\eta^2 = \Sigma^{-1} \beta$ and hence follows from the typical
Poissonian population-dependent noise scaling
$\eta \propto \Sigma^{-1/2}$.

Let $P(t)$ be some given measure of the intensity of the disease, such
as the absolute or relative disease prevalences, $I(t)$ or
$\Sigma^{-1}I(t)$, respectively. Another alternative could be the
infectious pressure $\varphi(t)$, which also informs on the current
disease intensity. Gathering data from the disease spread means
collecting information about $P(t)$, typically in the form of a
time-series, $(F(P(t_i))_i$, say, for some measurement operator
$F$. Consider \emph{endemic} conditions first, that is, $P$ is
considered stationary. As an ansatz, suppose
\begin{align}
  \label{eq:rho}
  P(t_i) &\sim \Pr[P = p] = \rho_\infty(p)
      \propto \exp\left( -2/\sigma^2 \, E(p) \right),
\end{align}
for some \emph{epidemiological potential} $E$. Under endemic
conditions we can expect this to be a single well potential, say,
\begin{align}
  \label{eq:sqwell}
  E(p) &\propto (p-\alpha)^2+\mbox{constant,}
\end{align}
or at least can locally be approximated by this form. This is
consistent with the \emph{Ornstein-Uhlenbeck (\OU-) process}
\cite{uhlenbeck1930theory},
\begin{align}
  \label{eq:OU1}
  X(0) &= X_0, \\
  \label{eq:OU2}
  dX(t) &= k-\mu X(t) \, dt+\sigma \, dW(t),
\end{align}
where $X(t) \in \Realdom$ and where the parameters of the model are
$\theta = [k,\mu,\sigma] \in \Realdom_+^3$ and in \eqref{eq:sqwell},
$\alpha = k/\mu$.

As a concrete example, we may approximate the continuous-time Markov
chain \eqref{eq:ctmc} by linearizing the rates around the non-trivial
stationary state $I_\infty = \Sigma (1-\gamma/\beta)$ of
\eqref{eq:SIS}. Similarly, the noise term $\sigma$ can be determined
by inspecting the total variance of the Poissonian rates in
\eqref{eq:ctmc} around this equilibrium. This yields a \emph{linear
  noise} \OU-approximation to the state variable $I$ of the SIS-model
\eqref{eq:ctmc} with parameters
\begin{align}
  \label{eq:SIS2OU}
  &\left. \begin{array}{rll}
      k &= \Sigma (\beta-\gamma)^2/\beta &= \gamma \Sigma
                                           (R_0-1)^2/R_0 \\
      \mu &= \beta-\gamma &= \gamma(R_0-1) \\
      \sigma^2 &= 2\Sigma (\beta-\gamma)^2/\beta &= 2\gamma\Sigma (R_0-1)/R_0
          \end{array} \right\} \\
  \intertext{where we use the approximation $R_0 = (1-\Sigma^{-1}/2)
  \times \beta/\gamma \approx \beta/\gamma$. Hence, \emph{with this specific
  interpretation of the \OU-process,}}
  \label{eq:SIS2OU_R0}
  &R_0 = \left(1-\Sigma^{-1}k/\mu\right)^{-1} = \left(1-\Sigma^{-1}\alpha\right)^{-1}.
\end{align}
The quality of this approximation is exemplified in
Fig.~\ref{fig:SIS_illu}; the effect of linearizing around the
stationary state can be seen as a slightly too fast transient compared
to the Markov chain.

\begin{figure}[t]
  \includegraphics{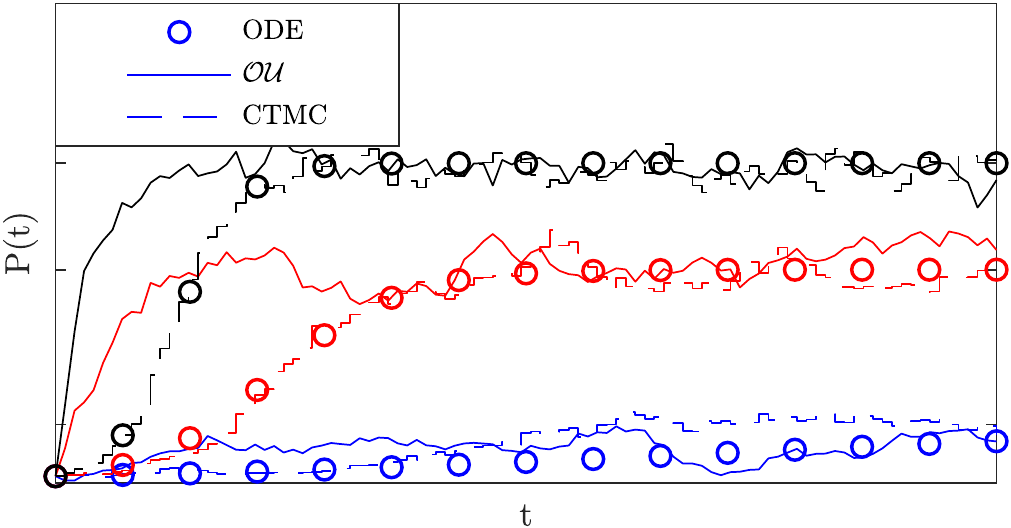}
  \caption{The approximation \eqref{eq:SIS2OU} exemplified for
    $R_0 = [1.1,1.5,2]$ (bottom and up), with $\Sigma = 1000$ and
    $\gamma = 1$. For comparison the ODE-, the \OU-, and the
    continuous-time Markov chain (CTMC) interpretation are shown.}
  \label{fig:SIS_illu}
\end{figure}

During \emph{epidemic} conditions we are rather sampling a transient
of the process and, moreover, there are also typically many kinds of
feedback involved, e.g., from minor adjustments of individual level
behavior, to major societal changes and governmental intervention
strategies. Although this means that there is now a greater challenge
in formulating a data-centric meta-model of the situation we may still
consider a window of time for which the disease spread parameters are
approximately constant and such that \eqref{eq:OU1}--\eqref{eq:OU2}
remain a relevant model of the situation. A difference is then that
data is gathered out of equilibrium (hence away from $\alpha$) and
presumably also under more noisy conditions with larger values of
$\sigma$. More general epidemiological potentials could be considered
through the SDE in gradient formulation,
\begin{align}
  dP(t) &= -\nabla E(P_t) \, dt + \sigma dW(t),
\end{align}
for which the stationary measure is still given by \eqref{eq:rho}. For
general SDE models, a change of variables allows us to locally
consider an SDE with constant noise \cite{shoji1998approximation} and,
in turn, any time-homogeneous SDE with constant noise,
\begin{equation}
  dX(t) = f(X_t)dt+\sigma dW(t),
\end{equation}
is of course readily linearized into an \OU-process.

\subsection{Exact sampling}

The \OU-process \eqref{eq:OU1}--\eqref{eq:OU2} is a Gaussian
continuous process with mean and covariance given by
\begin{flalign}
  \label{eq:OUMean}
  \Expect[X_t] &= \frac{k}{\mu}(1-e^{-\mu t})+e^{-\mu t}X_0, \\
  \label{eq:OUCov}
  \Cov[X_t,X_s] &= \frac{\sigma^2}{2\mu}(e^{-\mu|t-s|}-e^{-\mu(t+s)}),
\end{flalign}
such that the stationary distribution is
$X_\infty \sim \normal(k/\mu,\sigma^2/(2\mu))$.

Numerical simulation procedures are typically based on Euler-type
discretization methods. Assume for convenience a fixed numerical time
step $\Delta t$ and compute
\begin{flalign}
  \label{eq:OUEul}
  X_{n+1}&=X_n+(k-\mu X_n)\Delta t +\sigma \Delta W_n,
\end{flalign}
with $\Delta W_n$ being i.i.d.~normally distributed numbers of zero
mean and variance $\Delta t$. The numerical trajectory $(X_n)$ is then
an approximation to the \OU-process $(X(t_n))$ at discrete times
$t_n = n\Delta t$ for $n = 0,1,\ldots$ and is also a Gaussian vector
with mean and covariance
\begin{flalign}
  \label{eq:EulMean}
  \Expect[X_n] &= \frac{k}{\mu} + (1-\mu \Delta t)^n
                 (X_0-\frac{k}{\mu}), \\
  \label{eq:EulCov}
  \Cov(X_n, X_{n+p}) &= \frac{\sigma^2 \Delta t}{1-(1-\mu \Delta t)^2}(1-(1-\mu \Delta t)^{2n})(1-\mu\Delta t)^p.
\end{flalign}
Actually, \eqref{eq:OUEul} forms an $AR(1)$-sequence
\cite{ghosh2016discrete}. Comparing
\eqref{eq:OUMean}--\eqref{eq:OUCov} with
\eqref{eq:EulMean}--\eqref{eq:EulCov} we readily find the following
useful result.

\begin{proposition}[\textit{Exact \OU-samples and backward analysis}]
  \label{prop:bwd}
  The discrete process $(X_n)$ given by the explicit Euler method
  \eqref{eq:OUEul} follows the law of the discrete samples of an exact
  \OU-process with perturbed parameters
  $(k_{\Delta t},\mu_{\Delta t},\sigma_{\Delta t})$, where
  \begin{flalign}
    \left.
    \begin{array}{rl}
      1-\mu \Delta t &= \exp{(-\Delta t \mu_{\Delta t})} \\
      \frac{k}{\mu} &= \frac{k_{\Delta t}}{\mu_{\Delta t}} \\
      \frac{\sigma^2}{2\mu -\mu^2 \Delta t} &= \frac{\sigma_{\Delta t}^2}{2 \mu_{\Delta t}}
    \end{array}
    \right\}
  \end{flalign}
  This system of equations can be solved explicitly provided
  $\Delta t < \mu^{-1}$,
  \begin{flalign}
    \label{eq:bwdparams1}
    \left.
    \begin{array}{rl}
      k_{\Delta t}&= -\frac{k\log{(1-\mu \Delta t)}}{\mu \Delta t} \\
      \mu_{\Delta t} &= -\frac{\log{(1- \mu \Delta t)}}{\Delta t} \\
      \sigma_{\Delta t}^2 &= -2\sigma^2 \frac{\log{(1-\mu \Delta t)}}{2 \mu \Delta t - \mu^2 \Delta t^2}
    \end{array}
    \right\}
  \end{flalign}
  These perturbed parameters are all first order perturbations in
  $\Delta t$ of the exact parameters.
\end{proposition}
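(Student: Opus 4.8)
The plan is to use that both candidate processes are Gaussian, so that equality in law reduces to equality of first and second moments. The Euler recursion \eqref{eq:OUEul} is a linear autoregression driven by i.i.d.\ Gaussian increments and started from the deterministic value $X_0$, hence $(X_n)$ is a Gaussian sequence with mean and covariance \eqref{eq:EulMean}--\eqref{eq:EulCov}; the exact \OU-process evaluated on the grid $t_n = n\Delta t$ is Gaussian with moments read off from \eqref{eq:OUMean}--\eqref{eq:OUCov}. It therefore suffices to exhibit parameters $(k_{\Delta t},\mu_{\Delta t},\sigma_{\Delta t})$ for which the two moment structures coincide for every index $n$ and every lag $p$.

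First I would match the means. Rewriting \eqref{eq:OUMean} at $t = n\Delta t$ as $\Expect[X(t_n)] = k_{\Delta t}/\mu_{\Delta t} + e^{-\mu_{\Delta t} n \Delta t}\bigl(X_0 - k_{\Delta t}/\mu_{\Delta t}\bigr)$ and comparing term by term with \eqref{eq:EulMean}, valid for all $n \geq 0$ and all $X_0$, forces the geometric ratios and the constant terms to agree, i.e.\ $e^{-\mu_{\Delta t}\Delta t} = 1 - \mu\Delta t$ and $k_{\Delta t}/\mu_{\Delta t} = k/\mu$, which are the first two relations. Substituting $e^{-\mu_{\Delta t}\Delta t} = 1-\mu\Delta t$ into the sampled form of \eqref{eq:OUCov} turns it into $\tfrac{\sigma_{\Delta t}^2}{2\mu_{\Delta t}}\bigl(1-(1-\mu\Delta t)^{2n}\bigr)(1-\mu\Delta t)^{p}$, and identifying the prefactor with that of \eqref{eq:EulCov} yields $\sigma_{\Delta t}^2/(2\mu_{\Delta t}) = \sigma^2\Delta t/(1-(1-\mu\Delta t)^2) = \sigma^2/(2\mu-\mu^2\Delta t)$, the third relation. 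Once the three relations hold all moments coincide, so the two Gaussian laws are equal.

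Next I would solve the system. The first relation gives $\mu_{\Delta t} = -\log(1-\mu\Delta t)/\Delta t$, which is well defined exactly when $1-\mu\Delta t>0$, i.e.\ $\Delta t < \mu^{-1}$, and then also guarantees $\mu_{\Delta t}>0$ and, with $k,\sigma>0$, that $(k_{\Delta t},\mu_{\Delta t},\sigma_{\Delta t}) \in \Realdom_+^3$; the second and third relations then give $k_{\Delta t} = (k/\mu)\mu_{\Delta t}$ and $\sigma_{\Delta t}^2 = 2\mu_{\Delta t}\sigma^2/(2\mu-\mu^2\Delta t)$, and rewriting the logarithms reproduces \eqref{eq:bwdparams1}. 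The first-order claim is then a Taylor expansion: inserting $\log(1-x) = -x - x^2/2 + \Ordo{x^3}$ with $x = \mu\Delta t$ gives $\mu_{\Delta t} = \mu\bigl(1+\mu\Delta t/2+\Ordo{\Delta t^2}\bigr)$, hence $k_{\Delta t} = k\bigl(1+\mu\Delta t/2+\Ordo{\Delta t^2}\bigr)$, while expanding numerator and denominator separately gives $\sigma_{\Delta t}^2 = \sigma^2\bigl(1+\mu\Delta t+\Ordo{\Delta t^2}\bigr)$; in each case the correction is $\Ordo{\Delta t}$.

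I do not expect any genuine analytic obstacle: the calculation is elementary once the Gaussian reduction is made. The one step that warrants care is the reduction itself --- verifying that matching the first two moments is both necessary and sufficient, which is where Gaussianity of the linear Euler recursion and of the \OU-process enters --- and, relatedly, checking that the moment identities (which involve the two functions $n \mapsto (1-\mu\Delta t)^n$ and $n \mapsto e^{-\mu_{\Delta t} n\Delta t}$ as well as the lag variable $p$) pin down the three parameters uniquely and consistently, rather than only for a single choice of $n$ or $p$.
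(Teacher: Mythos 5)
Your proposal is correct and follows essentially the same route as the paper, which proves the proposition simply by comparing the sampled mean and covariance \eqref{eq:OUMean}--\eqref{eq:OUCov} of the exact \OU-process with the $AR(1)$ moments \eqref{eq:EulMean}--\eqref{eq:EulCov} of the Euler recursion, invoking Gaussianity to conclude equality in law. Your added care in checking that the matching holds uniformly in $n$, $p$, and $X_0$, and the explicit Taylor expansions verifying the first-order perturbation claim, are sound and merely make explicit what the paper leaves as ``readily found.''
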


The inverse of \eqref{eq:bwdparams1} is that
\begin{flalign}
  \label{eq:bwdparams2}
  \left.
  \begin{array}{rl}
    k &= \mu \frac{k_{\Delta t}}{\mu_{\Delta t}} \\
    \mu &= \frac{1-\exp(-\Delta t\mu_{\Delta t})}{\Delta t} \\
    \sigma^2 &= \sigma^2_{\Delta t} \, \frac{2-\Delta
                 t\mu}{2\mu_{\Delta t}} \mu
  \end{array}
  \right\}
\end{flalign}
It follows that, for \emph{given} parameters
$[k_{\Delta t},\mu_{\Delta t},\sigma_{\Delta t}]$, a forward Euler
simulation using the new set of parameters $[k,\mu,\sigma]$ defined by
\eqref{eq:bwdparams2} will produce a sample trajectory obeying the law
of the \OU-process with the given parameters exactly.



\section{Bayesian filtering with full information}
\label{sec:UnfilteredState}

We first briefly consider in this section the behavior of the Bayesian
posterior as a function of the prior and of data in the sense of full
state measurements. Hence we suppose that sampled process data
$\Ddata_\Ndata = (d_i)_{i =0}^{\Ndata}$ is available at some fixed
time step $h$, that is, $d_i = X(t_i)$ with $t_i = ih$ and $X(\cdot)$
an \OU-process with $X_0$ for simplicity considered a known sure
number.


It will be convenient to consider the following reparametrization of
the \OU-process
\begin{flalign}
  \label{eq:phi2abg}
  \left. \begin{array}{rl}
            \alpha &= \frac{k}{\mu} \in \Realdom \\
           \beta &= e^{- \mu h} \in (0,1) \\
            \gamma &= \frac{2 \mu}{\sigma^2} \cdot \frac{1}{1-e^{-2
                     \mu h}} \in \Realdom_+\\
          \end{array} \right\}
\end{flalign}
and we put $u \equiv [\alpha,\beta,\gamma]$ for brevity, also defining
$u_0 \equiv [\alpha_0,\beta_0,\gamma_0]$, i.e., the true parameters
generating the data.

Under this reparametrization we readily find from
Proposition~\ref{prop:bwd} that the \emph{exact} discrete process can
be written in the $AR(1)$-sequence form
\begin{align}
  \label{eq:AR1}
  X_{i+1} &= \beta X_i+\alpha (1-\beta)+\gamma^{-1/2} \xi_i,
\end{align}
where the $\xi_i \sim \normal(0,1)$ are independent. Writing
$\delta := \alpha(1-\beta)$, explicit least squares estimators for the
parameters can be found by solving
\begin{flalign}\label{eq:lsq1}
  \min_{\delta,\beta} \| A \times [\delta,\beta]^T-b\|_2^2 &=:
  \min_{\delta,\beta} \left\| \begin{bmatrix}
      \one & [d_i]_{i=0}^{\Ndata-1}
    \end{bmatrix}  \times [\delta,\beta]^T-
    [d_{i+1}]_{i=0}^{\Ndata-1}\right\|_2^2,
  \intertext{where the brackets over the data form column vectors. The
    residual of this solution implies the corresponding estimator for $\gamma$,}
  \label{eq:lsq2}
  \hat{\gamma}^{-1} &= (\Ndata-2)^{-1}\| A \times [\hat{\delta},\hat{\beta}]^T-b \|_2^2.
\end{flalign}
By the Gaussian character of the \OU-process these estimators coincide
with the maximum likelihood estimators.

The Bayesian convergence to the true parameters as $\Ndata \to \infty$
can be characterized by either the Bernstein von Mises theorem (BvM),
for $h$ fixed, or by contrast functions convergence for a fixed time
window $t \in [0,T]$. BvM states that the Bayesian posterior converges
to a normal distribution centered at the maximum likelihood estimate
with the inverse Fisher information matrix (FIM) as covariance
\cite{BASAWA1980255, mishra1987rate, bishwal2000rates}. Similarly, the
contrast functions approach considers the convergence of the
approximation of discretized observations, approaching the same normal
distribution for $\Ndata$ large enough \cite{florens1989approximate,
  genon1990maximnm, kessler1997estimation}. In the present case the
FIM can be determined explicitly using the log-likelihood
$\log\mathcal{L}(X_{i+1} | X_i; \; u)$ induced by \eqref{eq:AR1} and
the definition
\begin{align}
  \Sigma &\equiv -\Expect \left[ \frac{\partial^2}{\partial
           u^2} \log\mathcal{L} \right](u_0). \\
  \intertext{The result is that}
  \label{eq:Sigma_val}
  \Sigma &= \diag\left((1-\beta_0)^{2}\gamma_0, 1/(1-\beta_0^2),
  1/(2\gamma_0^{2})\right).
\end{align}

\begin{proposition}[\textit{BvM Theorem for \eqref{eq:AR1}}]
  Provided that the prior has a continuous and positive density in an
  open neighborhood of $(\alpha,\beta,\gamma)$, as
  $\Ndata \to \infty$ we have that the ML-estimators
  \eqref{eq:lsq1}--\eqref{eq:lsq2} converge to the true values as
  \begin{align}
    \label{eq:fullstateQ}
    \sqrt{N}(\hat{u}-u_0) \xrightarrow{d} \normal(0,\Sigma^{-1}),
  \end{align}
  (convergence in the sense of distribution) with $\Sigma$ defined by
  \eqref{eq:Sigma_val}. Alternatively, \eqref{eq:fullstateQ} dictates
  the asymptotic convergence of the mean of the Bayesian posterior.
\end{proposition}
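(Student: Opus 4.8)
The plan is to establish \eqref{eq:fullstateQ} directly from the explicit $AR(1)$ structure of \eqref{eq:AR1} and then to deduce the posterior statement from a Bernstein--von Mises theorem for ergodic Markov chains. First I would note that, because $\beta_0 = e^{-\mu_0 h}\in(0,1)$, the chain \eqref{eq:AR1} is geometrically ergodic with stationary law $\normal\big(\alpha_0,\gamma_0^{-1}(1-\beta_0^2)^{-1}\big)$ and finite moments of every order. The per-step log-likelihood induced by \eqref{eq:AR1} is
\[
  \ell_i(u) = \tfrac12\log\gamma - \tfrac{\gamma}{2}\big(X_{i+1}-\beta X_i-\alpha(1-\beta)\big)^2 + \mathrm{const},
\]
and the key structural fact is that $\big(\nabla\ell_i(u_0)\big)_i$ is a martingale difference sequence with respect to the natural filtration, whose summed conditional covariances, being polynomial functionals of the $X_i$, converge almost surely by the ergodic theorem to the matrix $\Sigma$ of \eqref{eq:Sigma_val}.

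Next I would apply the martingale central limit theorem, the conditional Lindeberg condition being immediate from the uniformly bounded higher moments, to get $N^{-1/2}\sum_i\nabla\ell_i(u_0)\xrightarrow{d}\normal(0,\Sigma)$; the same ergodic argument gives $-N^{-1}\sum_i\nabla^2\ell_i(u)\to\Sigma$ uniformly for $u$ near $u_0$. Consistency of $\hat u$ I would obtain either from the standard identifiability argument or, more concretely, from the closed forms \eqref{eq:lsq1}--\eqref{eq:lsq2}: $\hat\beta$ and $\hat\delta$ are ratios of empirical moments converging to their stationary values, and the residual \eqref{eq:lsq2} converges to $\gamma_0^{-1}$. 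A first-order Taylor expansion of $\nabla\ell_N(\hat u)=0$ about $u_0$ then gives $\sqrt N(\hat u-u_0)=\big(-N^{-1}\nabla^2\ell_N(\tilde u)\big)^{-1}N^{-1/2}\nabla\ell_N(u_0)$, and Slutsky's lemma yields \eqref{eq:fullstateQ}. A minor bookkeeping step handles the fact that \eqref{eq:lsq1} estimates $(\delta,\beta)$ with $\delta=\alpha(1-\beta)$: the delta method with $\partial\alpha/\partial\delta=(1-\beta_0)^{-1}$ transports the limit to the $u=[\alpha,\beta,\gamma]$ coordinates, matching the diagonal $\Sigma^{-1}$ already computed in \eqref{eq:Sigma_val}.

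For the Bayesian conclusion, the ingredients just assembled --- local asymptotic normality at rate $\sqrt N$ with nonsingular information, plus a $\sqrt N$-consistent estimator --- are exactly the hypotheses of the Bernstein--von Mises theorem for discretely sampled ergodic processes \cite{BASAWA1980255, mishra1987rate, bishwal2000rates}. Under the stated assumption that the prior has a continuous, strictly positive density near $u_0$, that theorem gives convergence in total variation of the posterior law of $\sqrt N(u-\hat u)$ to $\normal(0,\Sigma^{-1})$; uniform integrability of the rescaled posterior, which again follows from the geometric ergodicity and the Gaussian tails of the likelihood, upgrades this to convergence of the mean, so that the posterior mean equals $\hat u+o_{\Pr}(N^{-1/2})$ and hence inherits \eqref{eq:fullstateQ}.

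I expect the main obstacle to be the uniformity rather than the pointwise limits: one must control $-N^{-1}\nabla^2\ell_N(u)$ uniformly on a neighborhood of $u_0$ (and, for the posterior claim, the uniform integrability of the rescaled posterior). Both are tractable here precisely because every quantity entering the likelihood is polynomial in the $X_i$ and the chain mixes geometrically with all moments finite, so the required estimates are routine --- if somewhat tedious --- consequences of the ergodic theorem together with standard tail bounds for the Gaussian innovations.
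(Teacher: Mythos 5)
Your proposal is correct and follows essentially the same route as the paper, which states this proposition without a separate proof and justifies it by the cited Bernstein--von Mises results for ergodic discretely-observed processes together with the explicit Fisher information computation yielding \eqref{eq:Sigma_val}; your martingale-CLT plus Taylor-expansion argument simply fills in the standard details behind those citations. One small imprecision: the change of coordinates from $(\delta,\beta)$ to $(\alpha,\beta)$ has a triangular Jacobian with off-diagonal entry $\partial\alpha/\partial\beta=\alpha_0/(1-\beta_0)$, not just the factor $(1-\beta_0)^{-1}$ you mention, and one must check (as a short computation confirms) that this off-diagonal term exactly cancels the asymptotic correlation between $\hat\delta$ and $\hat\beta$ so that the limiting covariance is the diagonal $\Sigma^{-1}$.
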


To summarize, the asymptotic variances of all parameters are
independent of $\alpha_0$. The noise term $\gamma_0$ mainly affects
the convergence of $\alpha_0$ (and $\gamma_0$ in an absolute sense).
Finally, an increase of $\beta_0$ implies a faster convergence towards
$\beta_0$ at the cost of a slower convergence towards $\alpha_0$.
Under the interpretation of an SIS-model
\eqref{eq:SIS2OU}--\eqref{eq:SIS2OU_R0}, the parameter $\alpha_0$ is
in one-to-one correspondence with the basic reproduction number $R_0$,
hence its central interest here.


The asymptotic nature of both the BvM Theorem and the contrast
function convergence is a poor match in epidemiological situations
where data is often scarce and poorly informative at the time scale
over which the parameters can be considered static. This motivates our
interest in also the pre-asymptotic regime of the Bayesian
posterior. We therefore consider prior densities of the specific form
\begin{equation}
  \label{eq:prior_form}
  \pi(u)\propto\gamma^r\exp{\left(-\frac{\gamma}{2}P(\alpha,\beta)\right)},
\end{equation}
where, for integrability, $P$ is to be a polynomial of degree $2$ in
$\alpha$ and nonnegative for $\beta\in(0,1)$. This choice has the
convenient property that the posterior measure after $\Ndata$
observations is
\begin{flalign}
  \label{eq:posterior_form}
  \Pi^{(\Ndata)}(u) &\propto \gamma^{\frac{\Ndata}{2}+r}
  \exp{\left(-\frac{\gamma}{2}(Q_\Ndata+P)(\alpha,\beta)\right)}, \\
  \intertext{where}
  \label{eq:Qdef}
  Q_\Ndata(\alpha,\beta) &\equiv \sum_{i=0}^{\Ndata-1} (d_{i+1}-\beta d_i-
  \alpha(1-\beta))^2,
\end{flalign}
and where $d_i = X(t_i)$ is the observation at time $t_i$.

\begin{remark}
  To include also the case of flat priors, while avoiding
  technicalities for nonintegrable densities, we note that, since the
  value of a constant prior has no influence on the posterior, we can
  still determine a posterior after a single initial observation, and
  then use this posterior as a prior for the rest of the observations.
\end{remark}

The following result examines the convergence of the Bayesian
posterior.
\begin{theorem}[\textit{Convergence of log-likelihood}]
  \label{th:Qconvergence}
  Consider the polynomial $q_\Ndata$:
  \begin{equation}
    q_\Ndata(\alpha,\beta) \equiv \Ndata^{-1}Q_\Ndata(\alpha,\beta) =
    \Ndata^{-1} \sum_{i=0}^{\Ndata-1}(d_{i+1}-\beta d_i -\alpha(1-\beta))^2.
  \end{equation}
  Then as $\Ndata \to \infty$ we have the almost everywhere uniform
  convergence on every compact of $q_\Ndata$ towards the function $f$,
  \begin{equation}
    f(\alpha,\beta) =
    (1-\beta)^2\left[\frac{1}{\gamma_0(1-\beta_0^2)}+
      (\alpha_0-\alpha)^2\right]+\frac{2\beta}{\gamma_0(1+\beta_0)}.
  \end{equation}
\end{theorem}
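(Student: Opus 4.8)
The plan is to expand $q_\Ndata$ into a fixed, finite linear combination of empirical moments of the data, invoke a strong law for the $AR(1)$-chain \eqref{eq:AR1} to identify the limit of each moment, simplify algebraically to recognise $f$, and finally upgrade pointwise convergence to uniform convergence on compacts using that $q_\Ndata$ and $f$ are polynomials of degree two in $(\alpha,\beta)$ sharing a fixed finite set of monomials.

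First I would expand the square in $q_\Ndata(\alpha,\beta)$ and collect the six monomials $1,\beta,\beta^2,\alpha(1-\beta),\alpha\beta(1-\beta),\alpha^2(1-\beta)^2$, obtaining
\begin{equation*}
  q_\Ndata(\alpha,\beta) = m^{(2)}_\Ndata - 2\beta\, c_\Ndata + \beta^2 \bar m^{(2)}_\Ndata - 2\alpha(1-\beta)\, m^{(1)}_\Ndata + 2\alpha\beta(1-\beta)\, \bar m^{(1)}_\Ndata + \alpha^2(1-\beta)^2,
\end{equation*}
with $\bar m^{(1)}_\Ndata = \Ndata^{-1}\sum_{i<\Ndata} d_i$, $m^{(1)}_\Ndata = \Ndata^{-1}\sum_{i<\Ndata} d_{i+1}$, $\bar m^{(2)}_\Ndata$ and $m^{(2)}_\Ndata$ the corresponding sums of squares, and $c_\Ndata = \Ndata^{-1}\sum_{i<\Ndata} d_i d_{i+1}$. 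The theorem then reduces to computing the almost sure limits of these five Cesàro averages and checking that the resulting polynomial is $f$.

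To get the limits I would use that \eqref{eq:AR1} with $X_0$ fixed is a geometrically ergodic Markov chain: it differs from its strictly stationary solution only by the deterministic term $\beta_0^i(X_0-\alpha_0)$, which is summable and hence negligible in any Cesàro mean. The ergodic theorem for the (bivariate) Markov chain $(d_i,d_{i+1})$ — equivalently, Birkhoff's theorem for the two-sided stationary version, whose state is a square-summable linear functional of the i.i.d.\ innovations $\xi_j$ — then yields, $\Pr$-a.s., $\bar m^{(1)}_\Ndata, m^{(1)}_\Ndata \to \alpha_0$, $\bar m^{(2)}_\Ndata, m^{(2)}_\Ndata \to \alpha_0^2 + v_0$ and $c_\Ndata \to \alpha_0^2 + \beta_0 v_0$, where $v_0 = 1/(\gamma_0(1-\beta_0^2))$ is the stationary variance of \eqref{eq:AR1} and $\beta_0 v_0$ its stationary lag-one autocovariance (both read off from \eqref{eq:OUCov} and \eqref{eq:phi2abg}, or directly from \eqref{eq:AR1}); the $d_i$- and $d_{i+1}$-sums share a limit since they differ by $(d_\Ndata - d_0)/\Ndata \to 0$ a.s. Substituting these limits into the expansion above and regrouping, the $v_0$-free terms simplify to $(1-\beta)^2(\alpha_0-\alpha)^2$ while the $v_0$-proportional terms give $v_0\big[(1-\beta)^2 + 2\beta(1-\beta_0)\big] = (1-\beta)^2/(\gamma_0(1-\beta_0^2)) + 2\beta/(\gamma_0(1+\beta_0))$, whose sum is exactly $f(\alpha,\beta)$.

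For the uniformity I would observe that, on the full-probability event on which the five moments converge, $q_\Ndata - f$ is a polynomial in $(\alpha,\beta)$ with the same fixed finite set of monomials and coefficients tending to $0$; since each monomial is bounded on any compact $K$, $\sup_K |q_\Ndata - f|$ is dominated by a finite sum of coefficient gaps times monomial suprema over $K$, hence tends to $0$. The main obstacle is the middle step — justifying the ergodic theorem for the averages of $d_i^2$ and especially of the lag-one product $d_i d_{i+1}$ under the non-stationary fixed initial condition, i.e.\ that the transient $\beta_0^i(X_0-\alpha_0)$ neither blocks convergence nor shifts the limits. Since only almost sure convergence (no rate) is needed, either a coupling to the stationary chain or the explicit moving-average form $d_i = \alpha_0 + \beta_0^i(X_0-\alpha_0) + \gamma_0^{-1/2}\sum_{j=0}^{i-1}\beta_0^{i-1-j}\xi_j$ combined with ergodicity of finite-block functionals of $(\xi_j)$ should suffice; the remaining algebra is routine.
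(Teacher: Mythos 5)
Your proposal is correct and follows essentially the same route as the paper: both reduce $q_\Ndata$ to a fixed finite collection of empirical moments, invoke the ergodic theorem for the Markov chain $(d_i,d_{i+1})$ to identify their almost-sure limits against the stationary measure, and upgrade pointwise to uniform convergence on compacts via convergence of the finitely many polynomial coefficients. Your added care with the non-stationary initial condition (arguing the transient $\beta_0^i(X_0-\alpha_0)$ is Ces\`aro-negligible) addresses a point the paper's proof passes over silently, but it is not a different argument.
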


\begin{proof}
  For a general point $(\alpha,\beta)$, we rely on the ergodic theory
  of Markov chains \cite[p.~472]{athreya2006measure} to get that
  $\Ndata^{-1}\sum g(d_i,d_{i+1})$ converges almost surely towards
  $\Expect[g(d_i,d_{i+1})]$ for $g$ integrable against the stationary
  measure. Using \eqref{eq:OUMean}--\eqref{eq:OUCov} this implies the
  limits:
  \begin{flalign}
    \Ndata^{-1}\sum_{i=0}^{\Ndata-1} d_{i+1}-\beta d_i
    &\longrightarrow \alpha_0(1-\beta), \\
    \Ndata^{-1}\sum_{i=0}^{\Ndata-1} (d_{i+1}-\beta d_i)^2
    &\longrightarrow (\alpha_0(1-\beta))^2+
    \frac{1+\beta^2}{\gamma_0(1-\beta_0^2)}-
    \frac{2\beta\beta_0}{\gamma_0(1-\beta_0^2)}.
  \end{flalign}
  Combined, we obtain the claimed limit in a pointwise sense. As each
  $(q_N)$ is a polynomial in $(\alpha, \beta)$ of bounded degree,
  pointwise convergence is equivalent to convergence of the
  coefficients, and hence implies the convergence on all compact sets.
\end{proof}

An obvious extension is to consider measurements polluted by noise,
say, $d_i = X_i+\eta_i$, for i.i.d.~$\eta_i \sim \normal(0,\eta)$ and
some variance $\eta$. The posterior so obtained is readily computed
via a recursive Kalman filter but does not have a simple analytic
form. To first order in $\eta$, however,
\eqref{eq:prior_form}--\eqref{eq:posterior_form} still holds provided
$\gamma$ is replaced with $[1/\gamma+\eta(1+\beta^2)]^{-1}$ and
$Q_N$ in \eqref{eq:Qdef} is replaced with
\begin{align}
  Q'_\Ndata&\equiv \sum_{i=0}^{\Ndata-1} \left[ d_{i+1}-\beta d_{i}-
  \alpha(1-\beta)+\eta\beta\gamma(d_{i}-\beta d_{i-1}-
  \alpha(1-\beta)) \right]^2,
\end{align}
where the right $\eta\beta\gamma$-term is skipped when $i =
0$. Intuitively, the first order effect of noise in data is to broaden
the posterior with the variance of this noise. We next proceed to
investigate more severe truncations of the measurements from the
epidemiological process.

\section{Bayesian filtering of surveillance data}
\label{sec:FilteredState}

In the previous section we considered full process data to be
available without any extrinsic noise. This models the best possible
Bayesian setup but is also clearly unrealistic in epidemics. As a
model of a more challenging situation we consider in this section the
recorded data to be some (possibly stochastic) function of the
\OU-process $X(\cdot)$, which itself is considered a latent
variable. We are initially concerned with \emph{binary data} of the
form $\Ddata_\Ndata = (d_i)_{i=0}^\Ndata$ where
$d_i = Y_i = \one_{X(t_i) \ge c}$, again over a uniform grid in time
$t_i = ih$, and for a known \emph{filter} cut-off value $c$ (see also
the related setup in \cite{clippedOU}). That is, the epidemiological
interpretation is that the data is considered to be time-discrete
information about whether or not the prevalence $X(\cdot)$ of the
population is above or below a certain known threshold $c$. With the
prevalence process hidden one is forced to estimate it simultaneously
with any parameter estimates. Using the $AR(1)$-form \eqref{eq:AR1} we
have that the stationary measure for the $(p+1)$ steps
$(X_i,\ldots,X_{i+p})$ is Gaussian $\normal(\alpha,\Sigma)$ with
$\Sigma$ given by (for $p \ge 0$)
\begin{flalign}
  \label{eq:equations}
  \Sigma = \Sigma(\beta,\gamma) = \frac{\gamma^{-1}}{1-\beta^2}
  \begin{bmatrix}
    1&\beta&\beta^2&\cdots&\beta^p \\
    \beta&1&\beta&\cdots &\beta^{p-1} \\
    \beta^2&\ddots&\ddots & \ddots &\cdots \\
    \vdots&\ddots &\beta&1&\beta \\
    \beta^p&\cdots&\beta^2&\beta&1
  \end{bmatrix}.
\end{flalign}
In order to be able to conduct an analysis where information is
obtained only from the observable $Y(\cdot)$, we take
\eqref{eq:equations} as a motivation for the following:

\begin{assumption}[\textit{$p$-step Markovian stationary assumption}]
  \label{ass:stationary}
  We say that we work under the stationary assumption whenever we
  assume that the law of the latent variable $Y(\cdot)$ can be
  directly inferred from the stationary $p$-step law \eqref{eq:equations}.
\end{assumption}

We stress that it is known that this type of clipped Gaussian
processes are not $p$-step Markov for any $p$
\cite{NotpMarkov}. Assumption~\ref{ass:stationary} rather serve as an
approximation where we approximately model $Y(\cdot)$ as if it was
$p$-step Markov with law deduced from \eqref{eq:equations}. The
pseudo-likelihood for the filtered variable $Y$ then becomes
\begin{align}
  \label{eq:pseudo_likelihood}
  \mathcal{L}_\Ndata(u) &= \prod_{i=0}^{\Ndata-p} \varphi_u(Y_i,\ldots,Y_{i+p})
\end{align}
where $\varphi_u(e)$ denotes the probability for a Gaussian stationary
filtered process with parameters $u$ to be $e\in\{0,1\}^{p+1}$. We
also define the pseudo-potential
\begin{equation}
  \label{eq:pseudo_potential}
  \Phi_\Ndata^u\equiv
  \log\mathcal{L}_N(u) = \sum_{i=0}^{\Ndata-p}\log\varphi_u(Y_{i},\ldots,Y_{i+p}).
\end{equation}

We show below in \S\ref{subsec:binary} that the stationary assumption
allows for converging posterior estimates for the parameter $\beta$,
but leaves any prior density unchanged over a certain curve in
$(\alpha,\gamma)$. In \S\ref{subsec:uncertainc} we briefly consider
the filter cutoff value $c$ (the sensitivity) to be uncertain and
straightforwardly show that any prior density on $c$ is unaffected by
data. We next replace the sharp deterministic filter by a stochastic
filter implementing a sigmoidal response function and sharpen our
results in this more general setting in
\S\ref{subsec:stoch_binary}. Finally, in \S\ref{subsec:nonbinary} we
consider slightly more informative measurements consisting of a finite
discrete response and we show that this resolves the singularity
issues associated with purely binary measurements.

\subsection{Binary measurements}
\label{subsec:binary}

We first show that under the $p$-fold stationary assumption one can
estimate the correlation term $\beta$ rather well, but increasing the
gap between the filter threshold $c$ and the mean $\alpha$ has an
effect on the likelihood which is indistinguishable in law from
increasing the noise $\gamma$.

We start with a technical lemma.
\begin{lemma}[\textit{Equality in law}]
  \label{lem:eqlaw}
  For $u$ in any admissible set of parameters, let $p_u$ be the law of
  $(Y_0,\ldots,Y_p)$ under Assumption~\ref{ass:stationary}
  corresponding to $\normal(\alpha_u,\Sigma_u)$. Then $p_u= p_{w}$ if
  and only if $\beta_u=\beta_{w}$ and
  $\sqrt{\gamma_u}(c-\alpha_u)=\sqrt{\gamma_{w}}(c-\alpha_{w})$.
\end{lemma}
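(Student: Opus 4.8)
The plan is to reduce the clipped vector to a standardized form in which the identifiable quantities become transparent, and then read them off from the one- and two-dimensional marginals of $(Y_0,\dots,Y_p)$. Since under Assumption~\ref{ass:stationary} we have $(X_0,\dots,X_p)\sim\normal(\alpha\one,\Sigma(\beta,\gamma))$ with common marginal variance $\gamma^{-1}/(1-\beta^2)$ and correlation matrix $R(\beta)=(\beta^{|i-j|})_{i,j}$ depending \emph{only} on $\beta$, I would put $Z_i:=\sqrt{\gamma(1-\beta^2)}\,(X_i-\alpha)$, so that $(Z_0,\dots,Z_p)\sim\normal(0,R(\beta))$ and $Y_i=\one_{X_i\ge c}=\one_{Z_i\ge\tau(u)}$ with the single scalar threshold $\tau(u):=\sqrt{\gamma(1-\beta^2)}\,(c-\alpha)$. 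The $\Leftarrow$ direction is then immediate: if $\beta_u=\beta_w$, the two hypotheses also force $\tau(u)=\tau(w)$ (the factors $\sqrt{1-\beta^2}$ agree), so under both $u$ and $w$ the vector $(Y_0,\dots,Y_p)$ is the vector of indicators of $\{Z_i\ge\tau\}$ for one and the same law $Z\sim\normal(0,R(\beta))$, hence $p_u=p_w$.

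For $\Rightarrow$ I would extract two scalar constraints from $p_u=p_w$. The one-dimensional marginal gives $\Pr[Y_0=1]=\Phi(-\tau(u))$, and strict monotonicity of the Gaussian CDF forces $\tau(u)=\tau(w)$. The pairwise marginal (this is the step that needs $p\ge1$) gives $\Pr[Y_0=1,Y_1=1]=\Psi(\tau,\beta)$, where $\Psi(\tau,\rho):=\Pr[Z_0\ge\tau,\,Z_1\ge\tau]$ is the standard bivariate normal orthant probability at correlation $\rho$; combined with $\tau(u)=\tau(w)=:\tau$ this yields $\Psi(\tau,\beta_u)=\Psi(\tau,\beta_w)$. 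Invoking Plackett's identity $\partial_\rho\Psi(\tau,\rho)=\phi_2(\tau,\tau;\rho)>0$ (the bivariate normal density is strictly positive for $\rho\in(-1,1)$), the map $\Psi(\tau,\cdot)$ is strictly increasing, so $\beta_u=\beta_w$. Finally, because $\beta\in(0,1)$ the common factor $\sqrt{1-\beta^2}$ is nonzero, and $\tau(u)=\tau(w)$ collapses to exactly $\sqrt{\gamma_u}(c-\alpha_u)=\sqrt{\gamma_w}(c-\alpha_w)$.

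The only ingredient beyond elementary manipulations is the strict monotonicity of the orthant probability in the correlation, which is the step I would present most carefully (via Plackett/Price); everything else is bookkeeping with the change of variables $X\mapsto Z$. I would also note that the degenerate case $\alpha=c$, i.e.\ $\tau=0$, needs no separate treatment: there $\Pr[Y_0=1]=\tfrac{1}{2}$ and $\Pr[Y_0=1,Y_1=1]=\tfrac{1}{4}+\tfrac{1}{2\pi}\arcsin\beta$, and the same monotonicity argument applies; this is also the regime in which $\gamma$ genuinely fails to be identifiable, consistent with the stated equivalence. (For $p=0$ only $\tau$ is identifiable, so the $\beta$-part would be vacuous; the lemma is applied with $p\ge1$.)
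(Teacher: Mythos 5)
Your proof is correct and follows essentially the same route as the paper's: the one-dimensional marginal pins down the standardized threshold $\sqrt{\gamma(1-\beta^2)}(c-\alpha)$, the two-dimensional marginal pins down $\beta$ via strict monotonicity of the Gaussian orthant probability in the correlation, and the converse is immediate since the standardized law depends only on $\bigl(\tau(u),\beta\bigr)$. The only difference is cosmetic: you justify strict monotonicity directly via Plackett's identity $\partial_\rho\Psi=\phi_2>0$, whereas the paper combines Slepian's inequality (monotonicity) with local analyticity to upgrade to strictness --- your version is arguably the cleaner of the two, and your explicit remark that $p\ge 1$ is needed for the $\beta$-part is a useful clarification the paper leaves implicit.
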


\begin{proof}
  $(\Longrightarrow)$ Suppose $p_u=p_{w}$.  Consider
  $(X_0^u,\ldots,X_p^u)$ a Gaussian vector with law
  $\normal(\alpha_u,\Sigma_u)$.  As
  $\Pr(X_0^u\leq c)=\sum_{e \in \{0,1\}^{p}}\varphi(0,e)$, we deduce
  that $\Pr(X_0^u\leq c)=\Pr(X_0^{w}\leq c)$. This can be written as
  an equality between standard cumulative distribution functions of
  Gaussians, since $\sqrt{\gamma_u(1-\beta_u^2)}(X_0^u-\alpha_u)$ and
  $\sqrt{\gamma_w(1-\beta_w^2)}(X_0^w-\alpha_w)$ are standard
  Gaussians.  This translates into:
  \begin{equation}
  \sqrt{\gamma_u(1-\beta_u^2)}(c-\alpha_u)=
  \sqrt{\gamma_{w}(1-\beta_w^2)}(c-\alpha_{w})
  \end{equation}
  We also have $\Pr(X_0^u,X_1^u\leq c)=\Pr(X_0^{w},X_1^{w}\leq c)$, and
  this implies, with $A = \sqrt{\gamma_u(1-\beta_u^2)}(c-\alpha_u)$,
  \begin{multline}
    \frac{1}{\sqrt{1-\beta_u^2}}
    \int\limits_{(-\infty,A]^2}\exp\left(-\frac{x^2+y^2-2\beta_u
        xy}{\sqrt{1-\beta_u^2}}\right) \, dx dy\\
    =\frac{1}{\sqrt{1-\beta_{w}^2}}
    \int\limits_{(-\infty,A]^2}\exp\left(-\frac{x^2+y^2-2\beta_{w}
        xy}{\sqrt{1-\beta_{w}^2}}\right) \, dx dy.
  \end{multline}
  This last expression shows that the map
  $\beta_u\mapsto\Pr(X_0^u,X_1^u\leq c)$ is locally analytic.  As a
  corollary of Slepian's lemma \cite{joag1983association}, this map is
  also increasing and hence strictly increasing.  It follows that the
  last equality implies $\beta_u=\beta_w$.

  $(\Longleftarrow)$ The law $p_u$ is uniquely determined by its
  $2^{p+1}$ values. All these values are of the type
  \begin{equation}
    I(Q;\; u) =\int_{Q(A)}
    (\det M(\beta_u))^{-(p+1)/2}  \times
    \exp\left(-x M(\beta_u)x^T\right) \, dx,
  \end{equation}
  where $Q(A)$ is a product of $p+1$ intervals, each being either
  $(-\infty, A]$ or $[A,+\infty)$. And so, if $\beta_u = \beta_w$ and
  $\sqrt{\gamma_u}(c-\alpha_u) = \sqrt{\gamma_{w}}(c-\alpha_{w})$, the
  laws $p_u$ and $p_{w}$ are indeed equal.
\end{proof}

\begin{theorem}[\textit{Non-identifiability}]
  \label{th:nonidentifiability}
  Assume $p\geq 1$.
  For any integrable prior $\pi$, non-zero on $\mathcal{E}$, and for
  $f$ bounded, as $\Ndata \to \infty$,
  \begin{equation}
    \int f\mathcal{L}_Nd\pi
    \rightarrow \int_\mathcal{E}fd\tilde{\pi},
  \end{equation}
  where $\mathcal{E} \equiv \{u;\; \sqrt\gamma(c-\alpha) =
  \sqrt\gamma_0(c-\alpha_0),\, \beta = \beta_0\}$ and $\tilde{\pi}$ is
  the prior restricted to the set $\mathcal{E}$.
\end{theorem}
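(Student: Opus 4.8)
The plan is to combine the law-of-large-numbers result of Theorem~\ref{th:Qconvergence} (adapted to the bounded functions $\varphi_u$) with the equality-in-law characterization of Lemma~\ref{lem:eqlaw}, and then run a standard Laplace/Bayes-consistency argument. First I would note that the pseudo-log-likelihood $\Phi_\Ndata^u = \sum_{i=0}^{\Ndata-p}\log\varphi_u(Y_i,\ldots,Y_{i+p})$ is a Birkhoff sum over the stationary ergodic chain $(Y_i)$ of the bounded function $g_u(e) = \log\varphi_u(e)$ (boundedness away from $-\infty$ holding on the admissible set where all cell probabilities are positive). Hence $\Ndata^{-1}\Phi_\Ndata^u \to \Expect_{u_0}[\log\varphi_u(Y_0,\ldots,Y_p)]$ almost surely; writing $p_{u_0}$ for the true $(p+1)$-cell law and $p_u$ for the model one, this limit equals $\sum_e p_{u_0}(e)\log p_u(e)$, which by Gibbs' inequality is maximized exactly when $p_u = p_{u_0}$, i.e.\ (Lemma~\ref{lem:eqlaw}) exactly on $\mathcal{E}$. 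So the normalized contrast function $\Ndata^{-1}\Phi_\Ndata^u$ converges to a function $\ell(u)$ that attains its maximum precisely on $\mathcal{E}$ and is strictly smaller off $\mathcal{E}$.

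Next I would upgrade this to the ratio statement. For the numerator, write $\int f\,\mathcal{L}_\Ndata\,d\pi = \int f\,e^{\Phi_\Ndata^u}\,d\pi$, and similarly $\int\mathcal{L}_\Ndata\,d\pi = \int e^{\Phi_\Ndata^u}\,d\pi$ for the normalizer, so that the claimed limit is really the statement that the posterior mass concentrates on $\mathcal{E}$ and, restricted there, remains distributed as $\pi$ conditioned to $\mathcal{E}$. The concentration-on-$\mathcal{E}$ step is the usual Schwartz/Doob-type argument: split the parameter space into a neighborhood $\mathcal{E}^\varepsilon$ of $\mathcal{E}$ and its complement; on the complement $\ell(u) \le \ell^* - \kappa(\varepsilon)$ for some $\kappa>0$ by continuity and compactness (here I would use that $q_\Ndata$, or rather the coefficient-level convergence from Theorem~\ref{th:Qconvergence}, gives uniform control so that $\Phi_\Ndata^u \le \Ndata(\ell^* - \kappa/2)$ eventually on the complement), while near some fixed point of $\mathcal{E}$ one has $\Phi_\Ndata^u \ge \Ndata(\ell^* - \kappa/4)$ on a set of positive $\pi$-measure; the ratio of the complement integral to the full integral is then $\le C e^{-\Ndata\kappa/4} \to 0$ since $f$ is bounded. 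This kills all the mass outside every neighborhood of $\mathcal{E}$.

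Finally, on $\mathcal{E}$ itself the contrast is flat — $\Phi_\Ndata^u$ is constant in law along $\mathcal{E}$ because $\varphi_u$ depends on $u$ only through $(\beta_u, \sqrt{\gamma_u}(c-\alpha_u))$ by Lemma~\ref{lem:eqlaw}, so $\mathcal{L}_\Ndata$ is literally constant on $\mathcal{E}$ for every $\Ndata$. Therefore within the shrinking tube $\mathcal{E}^\varepsilon$ the weight $e^{\Phi_\Ndata^u}$ contributes no $u$-dependence transverse to $\mathcal{E}$ beyond what is already forced to collapse onto $\mathcal{E}$, and one is left with $\int_{\mathcal{E}^\varepsilon} f\,d\pi \big/ \pi(\mathcal{E}^\varepsilon) \to \int_{\mathcal{E}} f\,d\tilde\pi$ by dominated convergence and the definition of $\tilde\pi$ as the normalized restriction of $\pi$ to $\mathcal{E}$ (here $\mathcal{E}$ being a smooth curve of positive codimension, one interprets $\tilde\pi$ as the disintegration/limit of $\pi$ conditioned to $\mathcal{E}^\varepsilon$; the hypothesis $\pi\neq 0$ on $\mathcal{E}$ guarantees this is well defined). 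The main obstacle I anticipate is the last point: making rigorous the sense in which $\pi$ "restricted to $\mathcal{E}$" arises as the $\varepsilon\to 0$ limit of $\pi(\cdot\mid\mathcal{E}^\varepsilon)$, since $\mathcal{E}$ is $\pi$-null — this requires either a coarea-type argument with a chosen transversal parametrization of $\mathcal{E}$, or simply defining $\tilde\pi$ by that very limit and checking it agrees with the intended object; the requirement $p\ge 1$ enters precisely here and in Lemma~\ref{lem:eqlaw}, since for $p=0$ one cannot identify $\beta_0$ and $\mathcal{E}$ would be larger.
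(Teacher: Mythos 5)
Your proposal is correct and follows essentially the same route as the paper: ergodic convergence of the normalized pseudo-log-likelihood to the cross-entropy, identification of its maximizing set with $\mathcal{E}$ via the Kullback--Leibler divergence and Lemma~\ref{lem:eqlaw}, and concentration of the posterior onto $\mathcal{E}$. The only difference is presentational: where you run a Laplace tube argument and explicitly flag the disintegration subtlety in defining $\tilde{\pi}$ on the $\pi$-null set $\mathcal{E}$, the paper factorizes the likelihood through the equivalence relation $u\sim u'$ and concludes from convergence of the factorized likelihood to $\delta_{w_0}$.
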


\begin{proof}
  At first, thanks to Lemma~\ref{lem:eqlaw}, the law
  $\varphi(Y_i\ldots,Y_{i+p})$ is not characterized by
  $u_0 = [\alpha_0,\beta_0, \gamma_0]$, but rather by
  $[\sqrt{\gamma_0}(c-\alpha_0),\beta_0]$.  As the $Y_i$'s are binary,
  the quantity $\varphi(Y_i,\ldots,Y_{i+p})$ can only take $2^{p+1}$
  values, which we denote by $\varphi_e$ for $e\in\{0,1\}^{p+1}$. This
  means that the log-likelihood under Assumption~\ref{ass:stationary}
  can be written as a finite sum:
  \begin{equation}
    \log\mathcal L_N = \sum\limits_{e\in\{0,1\}^{p+1}}
    N(e) \log\varphi_e.
  \end{equation}
  where
  \begin{equation}
    N(e) = \sharp\{i\in\{0,\Ndata-p\}; \; (Y_i,\ldots,Y_{i+p})=e\}
  \end{equation}

  Now, as the discrete \OU-process is an $AR(1)$-sequence
  \cite{ghosh2016discrete}, we may infer the convergence of
  $N(e)/\Ndata$ towards a stationary value:
  \begin{equation}
    \frac{\log\mathcal{L}_N}{\Ndata} \rightarrow
    \sum\limits_{e\in\{0,1\}^{p+1}}\varphi_e^{u_0}\log\varphi_e^u,
  \end{equation}
  where $\varphi_e^u$ denotes the law of a stationary filtered process
  with parameter $u$.  We note that
  \begin{equation}
    \lim_{\Ndata \to \infty} \frac{\log\mathcal{L}_\Ndata}{\Ndata}=\sum\limits_{e\in\{0,1\}^{p+1}}\varphi_e^{u_0}
    \log\frac{\varphi_e^u}{\varphi_e^{u_0}}+
    \sum\limits_{e\in\{0,1\}^{p+1}}\varphi_e^{u_0}\log\varphi_e^{u_0}.
  \end{equation}
  Up to a constant this limit is the negative of the Kullback-Leibler
  divergence between the laws $\varphi^u$ and $\varphi^{u_0}$, which
  vanishes if and only if the two distributions are equal. We already
  know that the laws of $\varphi^u$ and $\varphi^{u_0}$ are equal if
  and only if $\beta = \beta_0$ and
  $\sqrt{\gamma}(c-\alpha) = \sqrt{\gamma_0}(c-\alpha_0)$, i.e., if
  $u \in \mathcal E$.  Consider the associated relation:
  \begin{equation}
    u = (\alpha,\beta,\gamma)\sim u'=(\alpha',\beta',\gamma') \text{ iff }
    \beta=\beta' \text{ and }\sqrt{\gamma}(c-\alpha)=
    \sqrt{\gamma'}(c-\alpha').
  \end{equation}
  It is straightforward to show that this relation is reflexive,
  symmetric, and transitive, thus forming an equivalence
  relation. Thanks to Lemma~\ref{lem:eqlaw}, we know that the map
  $u\mapsto\mathcal{L}_\Ndata(u)$ factorizes through the equivalence
  relation to a map $w\mapsto\tilde{\mathcal{L}}_\Ndata(w)$ where $w$
  ranges over the different equivalence classes of $u$.

  Lemma~\ref{lem:eqlaw} also allows us to state that the map
  $\tilde{\mathcal{L}_\Ndata}$ is injective. Consider $w_0$ in the
  equivalence class of $u_0$. Then for any neighborhood $W$ of $w_0$,
  there is a $\delta > 0$ such that for $\Ndata$ large enough,
  $w \not \in W$ implies
  \begin{equation}
    \frac{\log{\tilde{\mathcal{L}}_\Ndata(w_0)}}{\Ndata} \ge
    \frac{\log{\tilde{\mathcal{L}}_\Ndata(w)}}{\Ndata} + \delta,
  \end{equation}
  which means that the posterior measure of $W$ converges towards
  1. Since the factorized map
  $\tilde{\mathcal{L}}_\Ndata \rightarrow \delta_{w_0}$, the theorem
  follows.
\end{proof}

The same argument implies that, if we consider the case $p = 0$, the
posterior will be even more degenerate. For $p = 0$ one gets the
convergence
\begin{equation}
  \int f\mathcal{L}_N d\pi\rightarrow
  \int_{\mathcal{E}'}fd\tilde\pi,
\end{equation}
where now $\tilde\pi$ is the prior restricted to $\mathcal{E}'$:
\begin{equation}
  \mathcal{E'} \equiv \{u;\; \sqrt{\gamma_u(1-\beta_u^2)}(c-\alpha_u)=
  \sqrt{\gamma_0(1-\beta_0^2)}(c-\alpha_0)\}.
\end{equation}
This case is relevant whenever we consider large gaps of time in
between the measurements such that they can be considered practically
independent.

At this point, let us remind that the quantities
$\mathcal{L}_\Ndata(u)$ and $\Phi_\Ndata^p(u)$ are defined in
equations~\eqref{eq:pseudo_likelihood}--\eqref{eq:pseudo_potential}.

\begin{theorem}[\textit{Rate of convergence of pseudo-potential}]
  Under the $p$-Markovian stationary Assumption~\ref{ass:stationary}
  there exists a mapping $f_p$ and a constant $\sigma_p>0$ such that,
  as $\Ndata \to \infty$, we have the convergence in law:
  \begin{equation}
    \sqrt{N}\bigg(\frac{\Phi_N^{p}(u)}{N}-f_p(u)\bigg)\rightarrow
    \normal(0,\sigma_p).
  \end{equation}
\end{theorem}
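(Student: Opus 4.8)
The plan is to read $\Phi_N^p(u)/N$ as an ergodic average of a bounded observable over a sliding window of the latent chain, and to conclude through a central limit theorem for additive functionals of geometrically ergodic Markov chains.

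Concretely, set $g_u(x_0,\ldots,x_p)\equiv\log\varphi_u(\one_{x_0\ge c},\ldots,\one_{x_p\ge c})$, so that by \eqref{eq:pseudo_potential} one has $\Phi_N^p(u)=\sum_{i=0}^{N-p}g_u(X_i,\ldots,X_{i+p})$, where $(X_i)$ is the stationary $AR(1)$ sequence \eqref{eq:AR1} generated by the true parameters $u_0$. The window process $\tilde X_i\equiv(X_i,\ldots,X_{i+p})$ is itself a time-homogeneous stationary Markov chain on $\Realdom^{p+1}$ with stationary law $\normal(\alpha_0,\Sigma_{u_0})$ as in \eqref{eq:equations}, and $g_u$ is bounded: since the observed $Y_i$ are binary, $\varphi_u$ takes only the $2^{p+1}$ values $\varphi_e^{u}$, $e\in\{0,1\}^{p+1}$, each a strictly positive Gaussian orthant probability, so $g_u\in L^\infty$ and has moments of all orders.

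I would then use that the scalar Gaussian $AR(1)$ chain with $|\beta_0|<1$ is geometrically ergodic --- in fact exponentially $\rho$-mixing, with $\rho(k)=|\beta_0|^{k}$ in the Gaussian case --- so that the window chain $(\tilde X_i)$ is geometrically ergodic as well (with $\rho$-coefficient $|\beta_0|^{(k-p)_+}$). The classical central limit theorem for bounded additive functionals of a geometrically ergodic Markov chain (equivalently, Ibragimov's CLT for exponentially mixing stationary sequences) then gives
\begin{equation*}
  \sqrt N\Bigl(\frac1N\sum_{i=0}^{N-p}g_u(\tilde X_i)-\Expect_{u_0}\bigl[g_u(\tilde X_0)\bigr]\Bigr)\xrightarrow{d}\normal(0,\sigma_p),
\end{equation*}
with limiting mean $f_p(u)\equiv\Expect_{u_0}[g_u(\tilde X_0)]=\sum_{e\in\{0,1\}^{p+1}}\varphi_e^{u_0}\log\varphi_e^{u}$ --- exactly the limit already identified in the proof of Theorem~\ref{th:nonidentifiability} --- and asymptotic variance
\begin{equation*}
  \sigma_p=\mathrm{Var}_{u_0}\bigl(g_u(\tilde X_0)\bigr)+2\sum_{k\ge1}\Cov_{u_0}\bigl(g_u(\tilde X_0),g_u(\tilde X_k)\bigr),
\end{equation*}
the series converging absolutely by exponential decay of the covariances. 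Replacing the upper summation limit $N-p$ by $N$ changes the sum by $O(1)$ bounded terms, which is $o(\sqrt N)$ and therefore affects neither the centering nor the limit.

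The delicate step, which I expect to be the main obstacle, is the strict inequality $\sigma_p>0$ (non-degeneracy of the Gaussian limit); here one uses $p\ge1$. I would argue through the Poisson equation: by geometric ergodicity there is $\hat h\in L^2(\tilde\pi)$ with $(I-P)\hat h=g_u-f_p(u)$, so that $\Phi_N^p(u)-Nf_p(u)$ differs by a bounded term from a martingale with stationary ergodic increments $D_i=\hat h(\tilde X_{i+1})-(P\hat h)(\tilde X_i)$, and $\sigma_p=\Expect_{u_0}[D_1^2]$ by the martingale CLT. If $\sigma_p=0$ then $\hat h(\tilde X_1)=(P\hat h)(\tilde X_0)$ a.s., hence inductively $\hat h(\tilde X_k)=(P^k\hat h)(\tilde X_0)$ a.s.\ for every $k$; letting $k\to\infty$ and using $P^k\hat h\to\Expect_{u_0}[\hat h]$ in $L^2$ forces $\hat h$ to be a.s.\ constant, whence $g_u=f_p(u)$ $\tilde\pi$-a.s. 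But $g_u$ is genuinely non-constant when $p\ge1$: the density of $\normal(\alpha_0,\Sigma_{u_0})$ is everywhere positive, so every configuration $e\in\{0,1\}^{p+1}$ has positive probability under $u_0$, while $\beta_u=e^{-\mu_u h}>0$ makes the stationary Gaussian with parameters $u$ positively associated, so the orthant probabilities $\varphi_e^{u}$ cannot all coincide. This contradiction yields $\sigma_p>0$, completing the argument, with $f_p$ and $\sigma_p$ (the latter depending also on $u_0$ and $c$) as displayed.
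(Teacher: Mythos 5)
Your main line of argument---reading $\Phi_N^p(u)/N$ as an additive functional of the sliding-window chain $\tilde X_i=(X_i,\ldots,X_{i+p})$, noting that $g_u$ takes only the $2^{p+1}$ values $\log\varphi^u_e$ and is therefore bounded, and invoking the CLT for geometrically ergodic (exponentially $\rho$-mixing) stationary chains---is exactly the argument the paper gives, except that the paper compresses it into one sentence (``a direct consequence of the Central Limit Theorem''). Your identification of $f_p(u)=\sum_e\varphi_e^{u_0}\log\varphi_e^{u}$ agrees with the limit computed in the proof of the non-identifiability theorem, and the boundary/edge-effect remarks are fine. Up to and including the conclusion that $\sqrt{N}(\Phi_N^p(u)/N-f_p(u))$ converges to $\normal(0,\sigma_p)$ with \emph{some} $\sigma_p\ge 0$, your proof is correct and is a fleshed-out version of the paper's.

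The gap is in the non-degeneracy step, which you rightly flag as the delicate point; note that the paper's own proof only asserts that $\sigma_p$ is ``a nonnegative constant,'' so you are attempting something the paper leaves unaddressed. Two problems. First, the induction $\hat h(\tilde X_k)=(P^k\hat h)(\tilde X_0)$ does not go through: $D_i=0$ gives $\hat h(\tilde X_2)=(P\hat h)(\tilde X_1)$, but $(P\hat h)$ is a different function of $\tilde X_1$ than $\hat h$, and the hypothesis only makes the latter $\sigma(\tilde X_0)$-measurable; nothing forces $(P\hat h)(\tilde X_1)=(P^2\hat h)(\tilde X_0)$, so the induction already fails at $k=2$. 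Second, and more fundamentally, the contradiction you aim for is not a contradiction: $\sigma_p=0$ is equivalent to $g_u-f_p(u)$ being an $L^2(\tilde\pi)$ \emph{coboundary}, not to $g_u$ being constant. For sliding-window functionals with $p\ge 1$ non-constant coboundaries exist---e.g.\ $g(x_0,\ldots,x_p)=v(x_1)-v(x_0)$ has partial sums bounded in $L^2$ and hence zero asymptotic variance---so showing that the orthant probabilities $\varphi_e^u$ do not all coincide does not rule out $\sigma_p=0$. To close this you would instead have to show that $\log\varphi_u(\one_{x_0\ge c},\ldots,\one_{x_p\ge c})-f_p(u)$ cannot be a coboundary for the window chain, for instance by arguing that a square-integrable function of $\tilde X_1=(X_1,\ldots,X_{p+1})$ that is $\sigma(\tilde X_0)$-measurable cannot genuinely depend on $X_{p+1}$, whereas $g_u$ does.
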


\begin{proof}
  This is a direct consequence of the Central Limit Theorem, with
  $f_p(u)$ being the mean value of $\Phi_N^p(u)$ and $\sigma_p$ a
  nonnegative constant.
\end{proof}

\subsection{Propagation of filter uncertainty}
\label{subsec:uncertainc}

A relevant variation of the theme is to consider the parameter $c$
(the ``test sensitivity'') an uncertain parameter. Mathematically,
this means considering the likelihood $\mathcal L_N$ a function of
$(\alpha,\beta,\gamma,c)$, and priors and posteriors depending also on
$c$. However, the following result shows that this setup will not
produce any more information about the parameters.

\begin{theorem}[\textit{Translation of filter uncertainty}]
  Consider a prior of the form $\mu(\alpha)\pi(c-\alpha,\beta,\gamma)$
  and the pseudo-likelihood in \S\ref{subsec:binary}. Then the
  resulting posterior measure will be of the form
  $\mu(\alpha)\Pi(c-\alpha,\beta,\gamma)$, where, for fixed $c$,
  $\Pi(c-\cdot,\cdot,\cdot)$ is the pseudo-posterior from the prior
  $\pi(c-\cdot,\cdot,\cdot)$.
\end{theorem}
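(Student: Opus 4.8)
The plan is to exploit the fact, already established in \S\ref{subsec:binary}, that the pseudo-likelihood $\mathcal{L}_N$ depends on the parameters $(\alpha,\beta,\gamma,c)$ only through the combinations $(c-\alpha,\beta,\gamma)$. Indeed, Lemma~\ref{lem:eqlaw} shows that the law $\varphi_u$ of a clipped stationary block $(Y_0,\ldots,Y_p)$ is a function of $\beta$ and of $\sqrt{\gamma}(c-\alpha)$ alone; in particular, writing $v \equiv (c-\alpha,\beta,\gamma)$, there is a function $\widehat{\mathcal{L}}_N$ with $\mathcal{L}_N(\alpha,\beta,\gamma;c) = \widehat{\mathcal{L}}_N(c-\alpha,\beta,\gamma)$. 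This is the only structural input needed, so the argument is essentially a change-of-variables bookkeeping.

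First I would write the unnormalized posterior density as the product of prior and pseudo-likelihood,
\begin{equation}
  \mu(\alpha)\,\pi(c-\alpha,\beta,\gamma)\cdot\widehat{\mathcal{L}}_N(c-\alpha,\beta,\gamma),
\end{equation}
and observe that, for $c$ fixed, the factor $\mu(\alpha)$ plays the role of a ``spectator'' variable while everything else is a function of $v = (c-\alpha,\beta,\gamma)$. Then I would perform the substitution $\alpha \mapsto s \equiv c-\alpha$ (a unit-Jacobian affine change of variable at fixed $c$), under which $\mu(\alpha)\,d\alpha$ becomes $\mu(c-s)\,ds$ and the remaining factors become $\pi(s,\beta,\gamma)\widehat{\mathcal{L}}_N(s,\beta,\gamma)$. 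The normalizing constant is
\begin{equation}
  Z_N = \int \mu(c-s)\,\pi(s,\beta,\gamma)\,\widehat{\mathcal{L}}_N(s,\beta,\gamma)\,ds\,d\beta\,d\gamma,
\end{equation}
so the posterior, expressed back in the original coordinates, is $\mu(\alpha)\,\Pi(c-\alpha,\beta,\gamma)$ with
\begin{equation}
  \Pi(s,\beta,\gamma) \equiv Z_N^{-1}\,\pi(s,\beta,\gamma)\,\widehat{\mathcal{L}}_N(s,\beta,\gamma).
\end{equation}
Finally I would check the claimed interpretation of $\Pi$: for a genuinely fixed value of $c$ (so that $\alpha$ and $c-\alpha$ are in bijection), the map $\Pi(c-\cdot,\cdot,\cdot)$ is exactly the pseudo-posterior obtained from the prior $\pi(c-\cdot,\cdot,\cdot)$ against the same pseudo-likelihood, since up to the harmless constant $\mu$ (which, being a function of $\alpha$ alone and hence of the spectator direction, contributes only to $Z_N$) the two normalized densities coincide.

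The only mild subtlety — and the one place I would be careful — is matching the two normalizing constants: strictly speaking $\Pi(c-\cdot,\cdot,\cdot)$ as ``the pseudo-posterior from prior $\pi(c-\cdot,\cdot,\cdot)$'' is normalized with respect to $\int \pi\,\widehat{\mathcal{L}}_N$, whereas the $Z_N$ above carries the extra $\mu(c-s)$ weight, so one must read the statement as an identity of posterior \emph{shapes} in the $(c-\alpha,\beta,\gamma)$ directions with $\mu(\alpha)$ factored off cleanly; the integrability of $\pi$ assumed in \S\ref{subsec:binary}, together with boundedness of $\mathcal{L}_N$ (it is a probability), guarantees all these integrals are finite and the factorization is legitimate. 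Everything else is routine.
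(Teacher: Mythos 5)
Your proposal is correct and rests on exactly the same observation as the paper's (one-line) proof, namely that the pseudo-likelihood depends on $(\alpha,c)$ only through $c-\alpha$, i.e., $\mathcal L_N(\alpha,\beta,\gamma,c)=\mathcal L_N(\alpha+t,\beta,\gamma,c+t)$ for all $t$; multiplying by the prior then gives the claimed factorization immediately. Your extra care about the normalizing constant is harmless but unnecessary, since the unit-Jacobian change of variables $(\alpha,c)\mapsto(\alpha,\,c-\alpha)$ makes $\mu(\alpha)$ a genuine spectator and the normalization factors cleanly.
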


\begin{proof}
  This result is immediate once one realizes that the
  pseudo-likelihood has the property that, for any $t\in \Realdom$,
  \begin{equation}
    \mathcal L_N(\alpha,\beta,\gamma,c) =
    \mathcal L_N(\alpha+t,\beta,\gamma,c+t).
  \end{equation}
\end{proof}

In other words, the only information we can infer on the parameters
$(c,\alpha)$ is the gap $c-\alpha$. Any uncertainty of $c$ can be
understood as an uncertainty on $\alpha$, and vice-versa.

\subsection{Non-perfect binary measurements}
\label{subsec:stoch_binary}

Rather than a sharp cut-off value $c$, most environmental sampling
methods obey some kind of \emph{sensitivity response}, e.g., of
sigmoid character, with a quick rise in the detection probability as
one progresses through some threshold region. Examples here could
include sampling and subsequent analysis of sewage water or animal
droppings, but this would also be a relevant model in the case of
statistical regression estimates using data obtained via
self-reporting smartphones applications.

A general ansatz to capturing this situation is to consider
\begin{equation}
  Y_i\sim \mathcal{B}(s(X_i)),
\end{equation}
where $s$ is a map from $\Realdom$ to $[0,1]$ and $\mathcal B$ denotes
the Bernoulli law. Typically, the map $s$ is sigmoidal with a gradient
around the threshold $c$ which depends on the sensitivity and
specificity of the test. We naturally ask that the efficiency of the
filter does not depend on its previous use, i.e., that $(Y_i|X_i)_i$
is an independent family. To construct such an object, one may
consider an i.i.d.~family of uniformly distributed variables $(\xi_i)$
on $(0,1)$, independent from $(X_i)$, and set
\begin{equation}
  Y_i = \one_{\xi_i\leq s(X_i)}.
\end{equation}
This object clearly fulfills all properties mentioned and is general
enough to capture also quite specific situations.

We will now establish our result in this more general
setting. Consider a bounded and continuous map
$f:\Realdom^{p+1}\rightarrow\Realdom$.  We observe the \OU-process
$X(t)$ through the map $f$, so the observations are
$Y_i = f(X_i,\ldots,X_{i+p})$ where as before $X_i = X(t_i)$. To
define a pseudo-likelihood, we reason as if the observed data can be
regarded as stationary.  We thus denote by $f_u$ the law of a
\emph{stationary} \OU-process with parameters $u$ filtered via the
measurement map $f$ and we remind ourselves that the latent process
$X(t)$ is not necessarily stationary.

We consider a likelihood of the kind
\begin{equation}
  \mathcal{L}_N \propto \exp\left(\sum_{i=0}^{\Ndata-p}
    \log f_u(Y_i)\right),
\end{equation}
normalized to mass one, and we denote the potential by
\begin{equation}
  g_\Ndata(u) := \Ndata^{-1}\sum\limits_{i=0}^{\Ndata-p}
  \log f_u(Y_i).
\end{equation}
This setup simply corresponds to observing the data portion
$(X_i,\ldots,X_{i+p})$ via the filter $f$ and, assuming stationarity,
building a pseudo-likelihood. Unfortunately this does not directly
include the intended case of a sigmoid filter, as this would rather
involve measurements $h(\xi_i,X_i,\ldots,\xi_{i+p},X_{i+p})$ with
$(\xi_i)$ an i.i.d.~sequence independent from $(X_i)$ and uniform on
$(0,1)$. However, once the proposed case is examined, one can get to
the latter case by exchanging the order of integration using the
Fubini property, i.e., studying the behavior of
$h(t_0,X_i,\ldots,t_p,X_{i+p})$ and then integrate over
$(t_0,\ldots,t_p)$. For example, consider the case $p = 1$. Then the
intended map $h$ would be
\begin{equation}
  h(t_0,x_0,t_1,x_1)=(\one_{t_0\leq s(x_0)},\one_{t_1\leq s(x_1)})
\end{equation}
For a fixed $u$, $f_u$ can take at most $4$ values, e.g.,
\begin{equation}
  f_u(1,1) = \Expect[s(Z_0)s(Z_1)],
\end{equation}
where $(Z_0,Z_1)$ is a stationary \OU-process of parameter $u$, and
similarly for $f_u(0,1)$, $f_u(1,0)$, and $f_u(0,0)$. As this
technique would render the proof lengthy, we decided to put it aside.

In order to state our result, we need to specify some minimal set of
regularity conditions. Convergence of the potential is required as
well as definiteness in the Kullback-Leibler divergence
($\KL$). Additionally, we shall also require a separation condition in
the large data limit.
\begin{assumption}[\textit{Regularity}]
  \label{ass:hyp}
  We assume the following specifics:
  \begin{enumerate}
  \item \label{it:ass1} The potential $g_N$ converges uniformly on the
    compacts of $u$.
  \item \label{it:ass2}
    $\KL(f_{u_0},f_u) = 0 \Longrightarrow u = u_0$.
  \item \label{it:ass3} There is a $\delta > 0$ and a compact
    neighborhood $K$ of $u_0$ such that, for $\Ndata$ large enough and
    for $u \in K^\complement$, $g_N(u_0) > g_N(u)+\delta$.
  \end{enumerate}
\end{assumption}

\begin{theorem}[\textit{Weak convergence}]
  \label{th:genweak}
  Under Assumption~\ref{ass:hyp}, we have the weak convergence
  \begin{equation}
    \mathcal{L}_N\rightarrow\delta_{u_0}.
  \end{equation}
\end{theorem}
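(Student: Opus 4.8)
The plan is the classical Wald-type posterior-consistency argument, Assumption~\ref{ass:hyp} having been tailored precisely to make it run. First I would identify the almost-sure limit of the normalised potential $g_N$. Its existence is granted by the uniform-convergence hypothesis, and the limit $g$ is then continuous; moreover, since $Y_i = f(X_i,\ldots,X_{i+p})$ is a bounded continuous functional of the ergodic $AR(1)$ chain $(X_i)$ (cf.~\cite{ghosh2016discrete} and the ergodic theorem of \cite[p.~472]{athreya2006measure}, exactly as for Theorem~\ref{th:Qconvergence}), the empirical averages defining $g_N(u)$ converge to the stationary expectation, so the limit is forced to be
\begin{equation*}
  g(u) = \Expect_{u_0}\!\big[\log f_u(Y_0)\big],
\end{equation*}
the expectation taken under the stationary law of $Y_0 = f(X_0,\ldots,X_p)$ at the true parameter $u_0$. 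Subtracting the value at $u_0$ yields $g(u)-g(u_0) = -\KL(f_{u_0},f_u)\le 0$, with equality iff $f_u = f_{u_0}$, which by the definiteness hypothesis forces $u = u_0$. Hence $g$ has a strict global maximum at $u_0$, and $\sup_{C} g < g(u_0)$ on every compact set $C$ with $u_0 \notin C$.

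Next comes the concentration estimate. Fix an open neighbourhood $V$ of $u_0$, shrinking it if necessary so that $V$ lies inside the compact neighbourhood $K$ furnished by the separation hypothesis; it suffices to show $\mathcal{L}_N(V^\complement)\to 0$. Writing $\mathcal{L}_N(du) \propto e^{N g_N(u)}\,\pi(du)$ for the underlying prior (or reference) measure $\pi$, taken finite with continuous and strictly positive density near $u_0$,
\begin{equation*}
  \mathcal{L}_N(V^\complement)
  = \frac{\int_{V^\complement} e^{N g_N(u)}\,\pi(du)}{\int e^{N g_N(u)}\,\pi(du)} .
\end{equation*}
The denominator I would bound below by restricting to a small closed ball $B\subset V$ centred at $u_0$: by continuity of $g$ at $u_0$ and uniform convergence of $g_N$ on $B$, one has $g_N \ge g(u_0)-\varepsilon$ on $B$ for $N$ large, so the denominator is at least $\pi(B)\,e^{N(g(u_0)-\varepsilon)}$.

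For the numerator I would split $V^\complement = (V^\complement\cap K)\cup K^\complement$. On the compact set $V^\complement\cap K$ the first paragraph gives $\sup g = g(u_0)-\eta$ for some $\eta>0$, hence $g_N \le g(u_0)-\eta/2$ there for $N$ large, and this piece contributes at most $\pi(K)\,e^{N(g(u_0)-\eta/2)}$. On $K^\complement$ the separation hypothesis gives $g_N(u) < g_N(u_0)-\delta$, and since $g_N(u_0)\to g(u_0)$ we get $e^{N g_N(u)}\le e^{N(g(u_0)+\varepsilon-\delta)}$, so this piece contributes at most $\|\pi\|\,e^{N(g(u_0)+\varepsilon-\delta)}$. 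Dividing by the lower bound on the denominator,
\begin{equation*}
  \mathcal{L}_N(V^\complement)
  \le \frac{\pi(K)}{\pi(B)}\,e^{-N(\eta/2-\varepsilon)}
  + \frac{\|\pi\|}{\pi(B)}\,e^{-N(\delta-2\varepsilon)} ,
\end{equation*}
which tends to $0$ as soon as $\varepsilon < \min(\eta,\delta)/2$. Since $V$ was an arbitrary neighbourhood of $u_0$, this is precisely the weak convergence $\mathcal{L}_N\to\delta_{u_0}$.

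The genuinely delicate point is the first step: the uniform-convergence hypothesis only guarantees that a limit of $g_N$ exists, so one must separately supply the ergodicity of the filtered sequence $\big(f(X_i,\ldots,X_{i+p})\big)_i$ and the integrability of $\log f_u$ needed to identify that limit as the Kullback--Leibler quantity $g(u_0)-g(u)=\KL(f_{u_0},f_u)$; this is what upgrades the definiteness hypothesis into the strict-maximum property used above. The separation hypothesis then plays only the role of controlling the non-compact parameter domain, where no uniform estimate is available and the normalising integral could otherwise be dominated by distant mass; granted that, the remainder is the routine Laplace/Wald bookkeeping, and in particular finiteness of the denominator — well-definedness of $\mathcal{L}_N$ — follows from the same two hypotheses.
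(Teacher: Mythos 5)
Your proof is correct and follows essentially the same route as the paper's: Step 1 identifies the limit of $g_N$ as $-\KL(f_{u_0},f_u)$ via ergodicity and uniform convergence, and Step 2 uses the three hypotheses of Assumption~\ref{ass:hyp} to concentrate posterior mass on any neighbourhood of $u_0$. You simply spell out the Laplace/Wald bookkeeping that the paper's two-line Step 2 leaves implicit (including the honest caveat that finiteness of the reference measure, or some integrability at infinity, is needed to control the $K^\complement$ contribution).
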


This theorem can be adapted to cover other situations:
\begin{itemize}
\item If $\KL(f_{u_0},f_u) = 0$ does not imply $u = u_0$, one can try
  to factorize the map through an equivalence relation (as in the
  proof of Theorem~\ref{th:nonidentifiability}) to get the convergence
  towards the indicative function of the set
  $\{u; \; \KL(f_{u_0},f_u) = 0\}$.
\item If we only want to consider parameters within a subset $U$ of
  the set of parameters, one can always consider the assumptions
  restricted to the set $U$, and then one would have the convergence
  only for priors with support in $U$.
\end{itemize}

\begin{proof}
  We divide the proof in two steps as follows.

  \paragraph{Step 1} Let us call $g$ the limit of $(g_N)$, which by
  assumption is not random. The same argument as in the proof of
  Theorem~\ref{th:Qconvergence}, the ergodicity of the process $(X_t)$,
  allows us to state that the limit
  function must be $\Expect[\log f_u]$ (up to a constant, the limit is
  actually $-\KL(f_{u_0},f_u)$). As adding a constant is equivalent to
  multiplying all posteriors by a nonnegative constant, we may assume
  the limit function to be equal to $u \mapsto -\KL(f_{u_0},f_u)$.

  \paragraph{Step 2} As $\mathcal{L}_N$ is proportional to
  $\exp(g_N)$, the assumptions allow us to deduce that the mass of any
  neighborhood of $u_0$ converges towards $1$. This implies the weak
  convergence of the posteriors towards the Dirac $\delta_{u_0}$.
\end{proof}

\subsection{Non-binary measurements}
\label{subsec:nonbinary}

As a final variation on the theme we now show how
Theorem~\ref{th:genweak} can be applied in such a way as to overcome
the issues with the non-identifiability of
Theorem~\ref{th:nonidentifiability}. The idea is that the test with
one filter $c$ evidently at best gives us a curve containing the
parameter $u_0$, namely the one satisfying $\beta = \beta_0$, and
$\sqrt{\gamma}(c-\alpha) = \sqrt{\gamma_0}(c-\alpha_0)$. Suppose
instead that we have two kinds of tests, one with cut-off $c_1$ and
one with cut-off $c_2$. Since the intersection of the two curves
implied by the respective filters $c_1$ and $c_2$ is exactly the point
$u_0$, it is natural to assume that two filters are enough to get the
convergence of the posterior towards $\delta_{u_0}$. We show that this
is indeed the case.
\begin{theorem}[\textit{Trinary filter}]
  \label{th:doublefilter}
  Let $c_1<c_2$ and consider the filtered values $Y_i$ to be
  \begin{equation}
    Y_i = \one_{X_i>c_1}+\one_{X_i>c_2},
  \end{equation}
  and the associated pseudo-likelihood
  \begin{equation}
    \mathcal{L}_\Ndata = \prod_{i=0}^{\Ndata-p} \varphi(Z_{i},\ldots,Z_{i+p}),
  \end{equation}
  where $\varphi$ is the multivariate cumulative distribution function
  of a Gaussian $\normal(\alpha,\Sigma)$ \eqref{eq:equations}.
  Then the sequence of pseudo-likelihoods converges weakly:
  \begin{equation}
    \mathcal{L}_N\rightarrow\delta_{u_0}.
  \end{equation}
\end{theorem}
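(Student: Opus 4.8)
The plan is to obtain the statement as an application of Theorem~\ref{th:genweak} to the observation map $f(x_0,\dots,x_p) = \big(\one_{x_0>c_1}+\one_{x_0>c_2},\ldots,\one_{x_p>c_1}+\one_{x_p>c_2}\big)$, writing $f_u$ for the law of the corresponding $(p+1)$-block of the stationary $AR(1)$-process \eqref{eq:AR1} with parameters $u$, in the sense of Assumption~\ref{ass:stationary}. The observable $Y_i$ lies in the finite set $\{0,1,2\}$, so $f_u$ is a probability vector over the $3^{p+1}$ patterns $e \in \{0,1,2\}^{p+1}$, and each entry $f_u(e)$ is a strictly positive, real-analytic function of $u$ on the interior of the admissible set (it is the mass that the nondegenerate Gaussian $\normal(\alpha\one,\Sigma(\beta,\gamma))$ from \eqref{eq:equations} places on an intersection of slabs and half-spaces cut out by $c_1,c_2$, a region of positive Lebesgue measure). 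It then remains to verify the three items of Assumption~\ref{ass:hyp}.

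For items~\ref{it:ass1} and~\ref{it:ass3} I would argue exactly as in the proof of Theorem~\ref{th:Qconvergence}. With $N(e) = \#\{i : (Y_i,\dots,Y_{i+p}) = e\}$, ergodicity of the $AR(1)$-chain gives $N(e)/N \to f_{u_0}(e)$ almost surely, and since
\[
  g_N(u) = \sum_{e \in \{0,1,2\}^{p+1}} \frac{N(e)}{N}\,\log f_u(e)
\]
is a finite sum of continuous functions with convergent coefficients, we get $g_N \to g$ with $g(u) = \sum_e f_{u_0}(e)\log f_u(e) = -\KL(f_{u_0},f_u) + \text{const}$, uniformly on every compact contained in the interior of the parameter domain (there $f_u(e)$ is bounded away from $0$, so the logarithms are uniformly bounded). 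Working on such a fixed compact admissible domain — which is legitimate by the remark following Theorem~\ref{th:genweak} — the continuity of $g$ together with item~\ref{it:ass2} also delivers the separation property~\ref{it:ass3}.

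The heart of the argument is item~\ref{it:ass2}: $\KL(f_{u_0},f_u) = 0$, i.e.\ $f_u = f_{u_0}$, should force $u = u_0$. The point is that the trinary block-law encodes both binary block-laws. Since $c_1 < c_2$ we have $\one_{X_i>c_1} = \one_{Y_i \ge 1}$ and $\one_{X_i>c_2} = \one_{Y_i = 2}$, so the images of $f_u$ under the coordinatewise maps $e \mapsto (\one_{e_j\ge 1})_j$ and $e \mapsto (\one_{e_j=2})_j$ are precisely the stationary $(p+1)$-block laws of the sharp filters with thresholds $c_1$ and $c_2$, say $p_u^{(c_1)}$ and $p_u^{(c_2)}$. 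Hence $f_u = f_{u_0}$ implies $p_u^{(c_1)} = p_{u_0}^{(c_1)}$ and $p_u^{(c_2)} = p_{u_0}^{(c_2)}$. Applying Lemma~\ref{lem:eqlaw} once for each threshold (this is where $p \ge 1$ enters, just as in Theorem~\ref{th:nonidentifiability}) gives $\beta_u = \beta_0$ together with $\sqrt{\gamma_u}(c_1-\alpha_u) = \sqrt{\gamma_0}(c_1-\alpha_0)$ and $\sqrt{\gamma_u}(c_2-\alpha_u) = \sqrt{\gamma_0}(c_2-\alpha_0)$. Subtracting these and using $c_2 - c_1 > 0$ yields $\gamma_u = \gamma_0$, whence $\alpha_u = \alpha_0$, so $u = u_0$. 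This is the geometric fact that the two curves $\{\beta=\beta_0,\ \sqrt\gamma(c_j-\alpha)=\sqrt{\gamma_0}(c_j-\alpha_0)\}$, $j=1,2$, intersect only at $u_0$.

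With all of Assumption~\ref{ass:hyp} verified, Theorem~\ref{th:genweak} gives $\mathcal{L}_N \to \delta_{u_0}$, as claimed. I expect the only genuinely delicate point to be item~\ref{it:ass3} — a uniform-in-$N$ lower bound on the pseudo-potential away from $u_0$, i.e.\ a coercivity statement as parameters approach the boundary of the admissible set — which is why I would state the result on a compact admissible domain; the identifiability in item~\ref{it:ass2}, by contrast, reduces cleanly to two invocations of Lemma~\ref{lem:eqlaw}.
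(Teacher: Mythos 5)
Your proposal is correct and follows essentially the same route as the paper: verify the three items of Assumption~\ref{ass:hyp} for the trinary observable and invoke Theorem~\ref{th:genweak}, with identifiability coming from the two threshold equations $\sqrt{\gamma}(c_j-\alpha)=\sqrt{\gamma_0}(c_j-\alpha_0)$, $j=1,2$, which force $u=u_0$ since $c_1\neq c_2$. Your write-up is in fact somewhat more explicit than the paper's (the reduction of the trinary block-law to the two binary laws via Lemma~\ref{lem:eqlaw}, and the caveat about restricting to a compact admissible domain for item~\ref{it:ass3}, which the paper passes over with ``we get directly'').
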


\begin{proof}
  It is sufficient to check that the sequence $(Y_i)$ verifies
  Assumption~\ref{ass:hyp}.  Considering
  $g_\Ndata := \Ndata^{-1} \log{\mathcal{L}_\Ndata}$, one has
  \begin{equation}
    g_\Ndata = \sum\limits_{e\in\{0,1,2\}^p}\frac{N_e}{\Ndata}\log{p_u(e)},
  \end{equation}
  where $N_e$ is the number of times $(Y_i,\ldots,Y_{i+p})$ matches
  $e$, and $p_u$ is the cumulative distribution function of the
  underlying Gaussian process $\normal(\alpha,\Sigma)$. From this we
  get directly conditions \eqref{it:ass1} and \eqref{it:ass3} of
  Assumption~\ref{ass:hyp}. For condition \eqref{it:ass2}, if the
  Kullback-Leibler divergence is zero, then we must have
  $p_u(e) = p_{u_0}(e)$ for all $e$.  This implies the equalities
  \begin{align}
    \left. \begin{array}{rl}
             \beta &= \beta_0 \\
             \sqrt{\gamma}(c_1-\alpha) &= \sqrt{\gamma_0}(c_1-\alpha_0)
             \\
             \sqrt{\gamma}(c_2-\alpha) &= \sqrt{\gamma_0}(c_2-\alpha_0)
           \end{array} \right\}
  \end{align}
  As $c_1\neq c_2$ we have $u=u_0$, and so the assumptions are
  fulfilled.
\end{proof}


\section{Illustrations}
\label{sec:Illustrations}

We devote this section to some illustrations of selected results from
\S\S\ref{sec:UnfilteredState} and \ref{sec:FilteredState}. We shall do
this in the intended epidemiological setting and thus no longer assume
the \OU-process, but rather the SIS- and SIS$_{\text{E}}$-models from
\S\ref{sec:SDEmeta} in the form of continuous-time Markov chains over
a discrete state-space. In \S\ref{subsec:asymptotic_uncertainty} we
investigate the precision of the predicted posterior uncertainty under
full state measurements, while in \S\ref{subsec:limits_convergence} we
offer a demonstration of the singular behavior under filtered
measurements. Finally, in \S\ref{subsec:network_epidemics} we
highlight the use of synthetic data when approaching more realistic
problems defined over a network.

The software for the numerical experiments is available for download
via the corresponding author's web-page\footnote{\updated{Refer to the
    BISDE-code at
    \href{https://user.it.uu.se/~stefane/freeware.html}{https://user.it.uu.se/$\sim$stefane/freeware.html}}}.

\subsection{Asymptotic uncertainty}
\label{subsec:asymptotic_uncertainty}

We first consider the Bayesian uncertainty under accurate measurements
and take the continuous-time Markov chain version of the SIS-model
\eqref{eq:ctmc} as an example. Using the
$\text{SIS} \leftrightarrow$~\OU\ approximate interpretation
\eqref{eq:SIS2OU} we have from \eqref{eq:phi2abg} the relations
\begin{align}
  \label{eq:map}
  &\left. \begin{array}{rl}
            \alpha_{\text{\OU}} &= \Sigma (1-R_0^{-1}) \\
            \beta_{\text{\OU}} &= \exp(-\gamma h(R_0-1)) \\
            \gamma_{\text{\OU}} &= \Sigma^{-1}R_0 \cdot [1-\beta_{\text{\OU}}^2]^{-1}
          \end{array} \right\} \quad
                                  \left. \begin{array}{rl}
            R_0 &= 1+\alpha_{\text{\OU}}(1-\beta_{\text{\OU}}^2)
                  \gamma_{\text{\OU}} \\
            \Sigma &= \alpha_{\text{\OU}}+
                     [(1-\beta_{\text{\OU}}^2)\gamma_{\text{\OU}}]^{-1} \\
            \gamma &= -\log \beta_{\text{\OU}} \cdot [h(R_0-1)]^{-1}
          \end{array} \right\}
\end{align}
where $\{R_0,\Sigma,\gamma\}$ are the SIS-model parameters.

We generate synthetic data from the Markov chain as illustrated in
Fig.~\ref{fig:SIS_illu} for $\Sigma = 1000$ and with ranges of values
$\gamma \in [0.1,1]$ and $R_0 \in (1,3.5]$. We let
$I(0) = 0.01\times\Sigma$ and sample exact values of $I(t_i)$ for
$t_i = ih$, $i = 1,\ldots,\Ndata$, and $h \equiv 1$. This corresponds
to 100 perfect samples in a closed population at a rate equivalent to
between one to one tenth the disease period unit ($= 1/\gamma$).

We evaluate the posterior over a grid of values in the
$(R_0,\gamma)$-plane by simply normalizing the likelihood for the
Markov chain given the synthetic data discussed previously. The
likelihood of the Markov chain is formally obtained by solving the
associated master equation which, however, is inconvenient to do
except for small populations $\Sigma$. A more general approach is via
a local linear Gaussian approximation and a Kalman filter. Put
$I_0 = I(0)$ and define
\begin{align}
  \nonumber
  I_{k+1} &= \left(1+\Delta t
            \beta\Sigma^{-1}(\Sigma-I_k)-\Delta t\gamma\right)I_k+w_k, \\
  \label{eq:Kalman}
  w_k &\sim \normal(0, [\Delta t\beta\Sigma^{-1}(\Sigma-I_k)+\Delta t\gamma]I_k),
\end{align}
that is, this is the forward Euler discretization of the Langevin
equations approximating the Markov chain. The Kalman filter associated
with \eqref{eq:Kalman} computes a likelihood for each data point,
albeit for a perturbed model. The relative error in the Langevin
approximation generally scales with the inverse of the population size
$\Sigma$, and can be expected to be rather small in the present
context (see \cite[Ch.~11.3]{Markovappr}). Further,
Proposition~\ref{prop:bwd} suggests analyzing the Euler discretization
via backward analysis as a parameter perturbation, but unfortunately
this is not generalizable to non-additive noise
\cite{bwdAnalSDEs}. For a resolved discretization, however,
$\gamma \Delta t \ll 1$, weak first order convergence can be expected
under broad conditions. We use the constant Kalman resolution
$\Delta t = h/4$ and next focus on the estimation error.

We have already evaluated the asymptotic covariance matrix under
accurate data in \eqref{eq:fullstateQ}. Using the linear uncertainty
transformation $Q' \approx JQJ^T$, where $J$ is the Jacobian of the
parameter map \eqref{eq:map}, and where $Q$ is the (diagonal)
covariance matrix in \eqref{eq:fullstateQ}, we can estimate the
posterior variance
\begin{align}
  \label{eq:VarR0}
  \text{Var}(R_0) &\approx \frac{R_0^3}{\Sigma \Ndata} \times
                    \frac{1+\beta_{\text{\OU}}}{1-\beta_{\text{\OU}}}.
\end{align}
A similar formula can be worked out for the variance of $\gamma$ as
well, although a bit more involved. For small enough $h$, the
denominator $1-\beta_{\text{\OU}} \sim \gamma h (R_0-1)$ in
\eqref{eq:VarR0}, and so the \emph{relative} uncertainty in any
consistent estimator of $R_0$ can be expected to depend weakly on
$R_0$ itself. This effect is seen for $R_0 > 1$ in
Fig.~\ref{fig:SIS_marginalR0} (\textit{top}), where it can also be
seen that the approximation \eqref{eq:VarR0} derived from the
\OU-approximation is somewhat optimistic. Similarly, we find that the
relative uncertainty of $R_0$ goes down with increasing values of
$\gamma$, or, which by \eqref{eq:map} is the same thing, with
decreasing correlation $\beta_{\text{\OU}}$
(cf.~Fig.~\ref{fig:SIS_marginalR0}, \textit{bottom}).

\begin{figure}[t]
  \centering
  \includegraphics[trim = 4.5cm 12.24cm 5.4cm 12.55cm,clip =
  true]{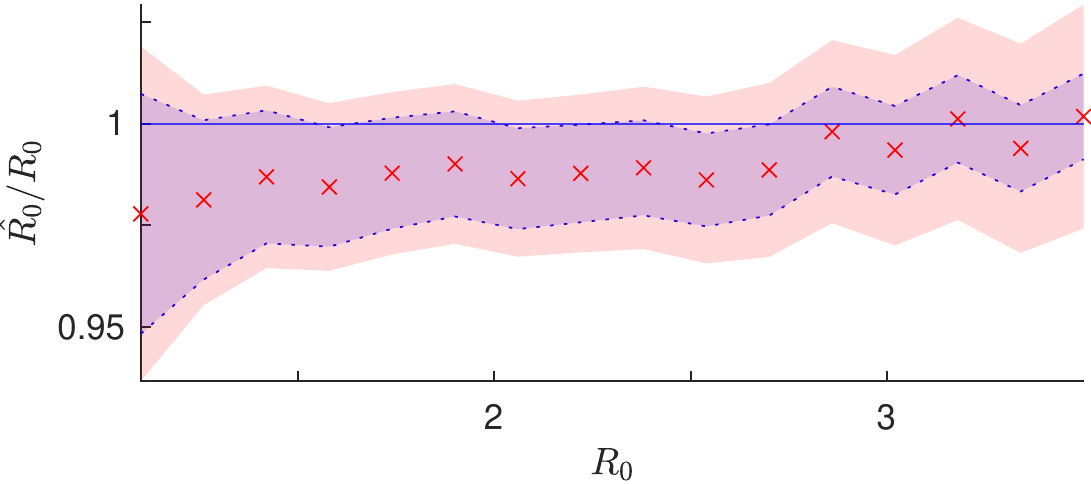}
  \includegraphics[trim = 4.5cm 12.24cm 5.4cm 12.55cm,clip =
  true]{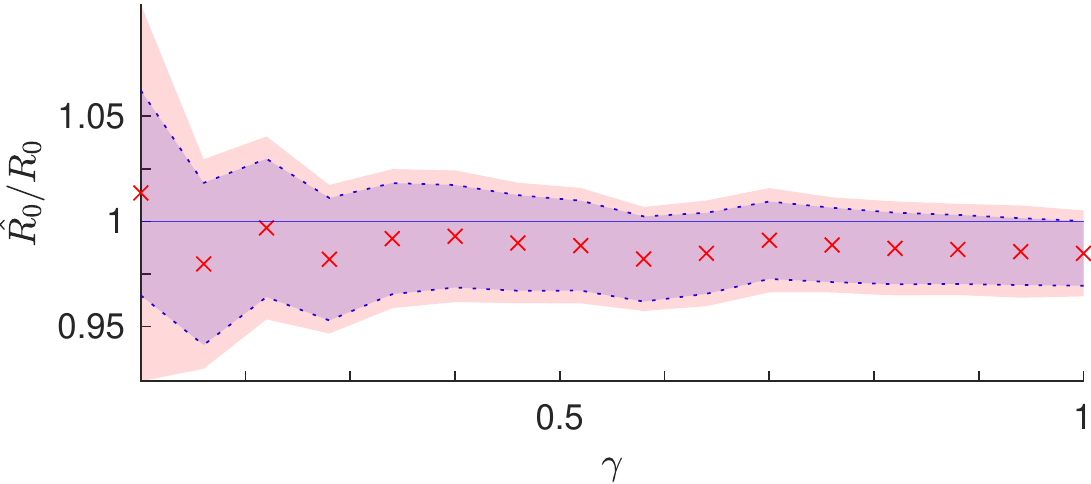}
  \caption{Marginal posterior uncertainty ($\pm 2$ SD) for the
    SIS-model and a range of parameters. \textit{Top:} with
    $\gamma \equiv 1$ fixed, \textit{bottom:} with $R_0 \equiv 1.5$
    fixed. \textit{Crosses:} MMSE-estimators $\hat{R}_0$ (i.e.,
    posterior means), \textit{dotted}: estimated uncertainty according
    to \eqref{eq:VarR0}, \textit{red}: posterior width ($\pm 2$ SD).}
  \label{fig:SIS_marginalR0}
\end{figure}

\subsection{Limits of convergence}
\label{subsec:limits_convergence}

The SIS-model investigated previously was dependent on two parameters
only and hence the singularity detected in \S\ref{sec:FilteredState}
is not likely to be limiting any convergence. While various model
modifications naturally lead to additional independent parameters,
e.g., an extra transition $S \to I$ modeling external infectious
events, the most immediate modification is to simply consider the
population size $\Sigma$ uncertain. For instance, this is a possible
setup for inference relying on sewage water analysis, where the data
is binary according to whether the infectious substance is above or
below some known threshold value $c$, but the sewage uptake area is
populated with an unknown number of individuals.

We remark that this is a considerably challenging task, and although
we are able to demonstrate the sharpness of our negative results from
\S\ref{sec:FilteredState} in this setting, the fact that this problem
can at all be approached is quite remarkable.

As ground truth we use the same parameters as in the previous section,
but with $\gamma = 1/10$ (time$^{-1}$) and $R_0 = 1.5$ fixed, and we
need to sample more, $\Ndata = 1000$ points equispaced with $h =
1$. The data is then taken to be the filtered sequence
$Y_i = \one_{X_i \ge c}$ with
$c = 0.9 \times I_\infty = 0.9 \times \Sigma(1-R_0^{-1})$.

The measurement map is strongly nonlinear and so the Kalman filter
needs to be extended in some way. We took an immediate approach by
simply discretizing the state variable $I$ into $M = 100$ Gaussian
particles, distributed according to the percentiles of the stationary
measure for a given proposed set of parameters. Each particle is
evolved in time $[0,h]$ using steps of size $\Delta t$ according to
the Kalman filter \eqref{eq:Kalman}, after which the prior
distribution is formed by aggregating the probability mass in the
vicinity of each particle. This yields the likelihood for a single
data point after which the posterior distribution for the state is
obtained by setting selected particles' mass to zero (according to the
data point) and rescaling appropriately.

From Theorem~\ref{th:nonidentifiability} we have the singular curve
$\sqrt{\gamma_{\text{\OU}}}(c-\alpha_{\text{\OU}}) = \mbox{constant}$,
which gets transferred via the map \eqref{eq:map} into a surface in
$(R_0,\Sigma,\gamma)$-space. After arbitrarily fixing $\gamma$ we thus
obtain a curve in the $(R_0,\Sigma)$-plane. Since the SIS-to-\OU\ map
\eqref{eq:SIS2OU} is an approximation and, moreover, the likelihood is
approximated via a Kalman filter, this analytical curve can be
expected to be a perturbation of the observed numerical posterior
level curves. As shown in Fig.~\ref{fig:SIS_filtered_conditionalR0}
the match is quite remarkable.

\begin{figure}[t]
  \centering
  \includegraphics{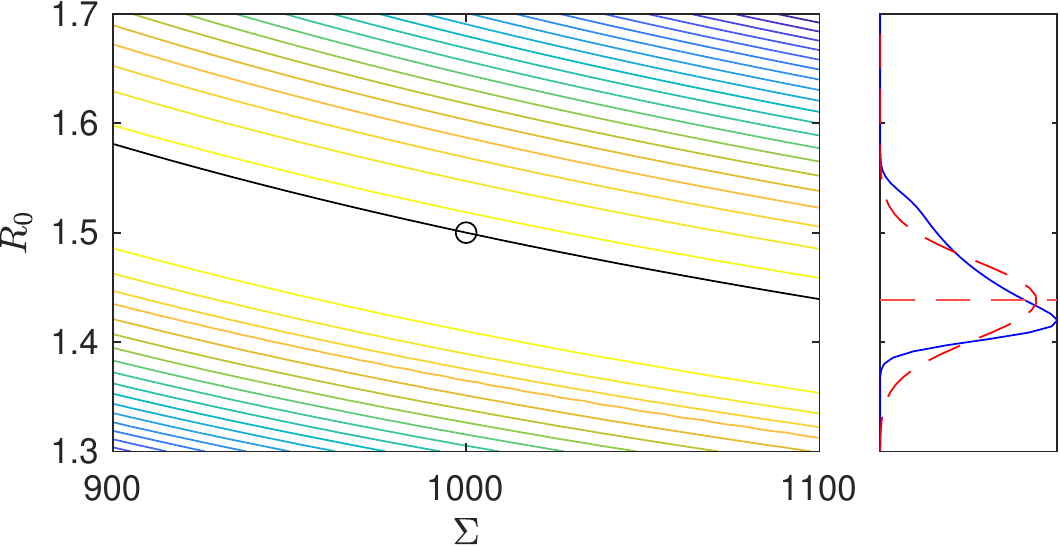}
  \caption{\textit{Left:} (log-)posterior for the SIS-model under
    filtered data and conditioned on the true value $\gamma = 1/10$
    (time$^{-1}$). Also indicated is the singular curve as predicted
    by theory passing through the true parameter generating the data
    (\textit{circle}). \textit{Right:} marginal distribution for $R_0$
    together with a normal fit (\textit{dashed}).}
  \label{fig:SIS_filtered_conditionalR0}
\end{figure}

\subsection{Network epidemics}
\label{subsec:network_epidemics}

Epidemic models on networks can give rise to phenomena not observed in
single-node systems, e.g., the \emph{rescue effect}
\cite{keeling2011modeling}, where the infection is ``rescued'' from
extinction through the network structure. Here we consider both the
SIS- and the SIS$_{\text{E}}$-model, respectively,
cf.~\eqref{eq:SIS}--\eqref{eq:SIS_E}. We assume these models at each
node in a network implicitly defined by pre-recorded movements of
individuals between the nodes. As a concrete example this would be an
appropriate model for estimating disease parameters in a monitoring
program for bovine animals using cheap, but low-informative, tests
collected on a weekly basis for a subsample of \emph{sentinel} nodes.

The nodal model is replicated across \numprint{1600} nodes, populated
with \numprint{196168} individuals, and the nodes are connected using
\numprint{466692} prescribed movements of individuals over four years,
see Figure~\ref{fig:network}. The system is not well stirred on the
aggregate level, but events occur frequently; the average \# of events
per sample node and day $= 0.20\, [0.19, 0.21]$ with 50\% credible
interval (CrI). This particular network was constructed by anonymizing
a set of recorded cattle movements and can be accessed through the
publicly available R-package SimInf~\cite{widgren2019siminf,
  engblom2020bayesian}. We extract model measurements from the same
100 randomly pre-selected sentinel nodes every 7th day for a total of
4 years. Each measurement is the outcome of a binary test: if the
prevalence ($P = \Sigma^{-1} I$) in the node is above a threshold
value ($c =$ 30\% or 4\% for the SIS- or the SIS$_{\text{E}}$-model,
respectively), where all nodes are seeded at 10\% or 2\% initially. In
Figure~\ref{fig:filtered}, we illustrate the sampling output in time
for the SIS$_{\text{E}}$-model.

\begin{SCfigure}[40][tbp]
  \includegraphics[]{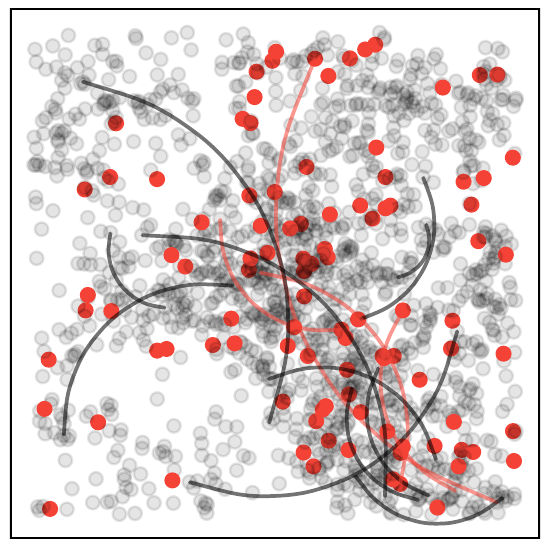}
  \caption{Illustration of the transport network; the red points are
    the sentinel nodes, and the grey points are the latent
    ones. Red/black lines are transport events into a sentinel- or
    latent node, respectively.}
  \label{fig:network}
\end{SCfigure}

\begin{figure}[tbp]
  \centering
  \includegraphics[]{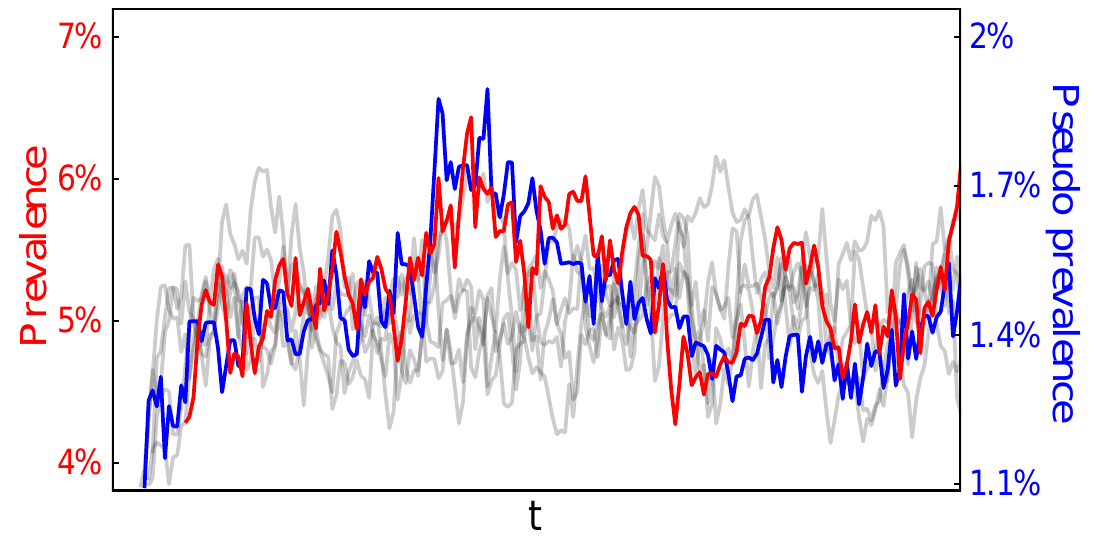}
  \caption{The population-weighted average prevalence (red) is
    unobservable, but the pseudo prevalence (blue) is obtained from
    weighting together multiple binary measurements. The least squares
    \OU-fit for the pseudo prevalence is used as summary statistics (a
    few samples in grey are shown).}
  \label{fig:filtered}
\end{figure}

A challenging aspect of many data-driven inference problems, e.g.,
including network dynamics is that the likelihood function is
intractable and must be estimated through repeated
simulations. Bayesian inference in this setting is termed
Likelihood-free inference (LFI), or Approximate Bayesian Computations
(ABC); see~\cite{marin2012approximate, sisson2018handbook} for
reviews. In this example, we consider the Sequential Monte Carlo (SMC)
adaptation of ABC (SMC-ABC) implemented in SimInf and described in
\cite{toni2009approximate}.

Our SMC-ABC implementation determines proposal rejections per
generation $n$ using the normalized Euclidean kernel
$K_\varepsilon(x,y) = \sqrt{\sum_i ((x_i-y_i)/x_i)^2} < \varepsilon_n$
for statistics of the simulation proposal $y$ and observation $x$ and
with a series of decreasing tolerances $\varepsilon_n$. The statistics
are computed as follows. Each simulation generates a time series of
\emph{pseudo prevalences}, i.e., a population-weighted sum of positive
samples. We interpret the time series as an \OU-process and select the
least square estimates \eqref{eq:lsq1}--\eqref{eq:lsq2} as
\emph{indirect} summary statistics~\cite{drovandi2011approximate}. A
word in favor of this particular choice of statistics for other ABC
implementations, e.g., synthetic likelihoods
\cite{wood2010statistical}, is that least square estimates are
asymptotically normally distributed under broad
assumptions~\cite{NEWEY19942111}. Notably, we get away with using
statistics with one more or equal dimension as the parameter set,
suggesting that this characterization is indeed very fitting.



For the inference we use a single initial simulation with the true
parameters $(\beta,\gamma,R_0)_{\text{SIS}} = (0.16, 0.1, = 1.6)$ and
$(\beta,\gamma,\rho,R_0)_{\text{SIS}_{\text{E}}} = (0.054, 0.1, 0.44,
= 1.108)$, respectively, \updated{and we infer all parameters
  simultaneously.}  For priors, \updated{since we have no likelihood
  and thus cannot easily produce a strictly non-informative prior, we
  take uniform distributions over quite large intervals in parameter
  space:} $\beta$ and $\gamma \sim \mathcal{U}(0,1)$ in both cases and
$\rho \sim \mathcal{U}(0.4,0.5)$.  We use the decreasing ABC
tolerances $\varepsilon_n = 100 \exp(-0.25 (n-1))$, for
$n = 1\ldots 15$ and \numprint{1000} SMC particles.

  We found that $\rho$ requires a tighter prior than the others
  for the computations to complete in a reasonable time: the
  SIS$_{\text{E}}$-model is considerably more challenging but is also
  more realistic, particularly so in our setting on a network where
  rather large prevalences are required for the SIS-model to not
  simply die out.

The results for the $R_0$-marginals are displayed in
Fig.~\ref{fig:SIS_network}, where we also investigate the
concentration effect of data through the relative change in quartile
coefficient of dispersion (QCD); a small concentration factor
indicates an accurately identifiable parameter. Although the
SIS$_{\text{E}}$-model is clearly more challenging, $R_0$ is still
well reconstructed for both models.

Binary data implies identifiability when given multiple observations
at the same time, e.g., over a small collection of nodes in a network
rather than on a single node, as indicated in a qualitative sense by
Theorem~\ref{th:doublefilter}. Additionally, $R_0$ is identifiable
with quite high accuracy even when the dependent parameters
covaries. The example is prototypical of using synthetic data to
evaluate the feasibility of an intended setup. Since the posteriors
are robust with respect to capturing the synthetic truth, we have good
reasons to also have some faith in the design when approaching real
data.


\begin{figure}[t]
  \centering
  \includegraphics[]{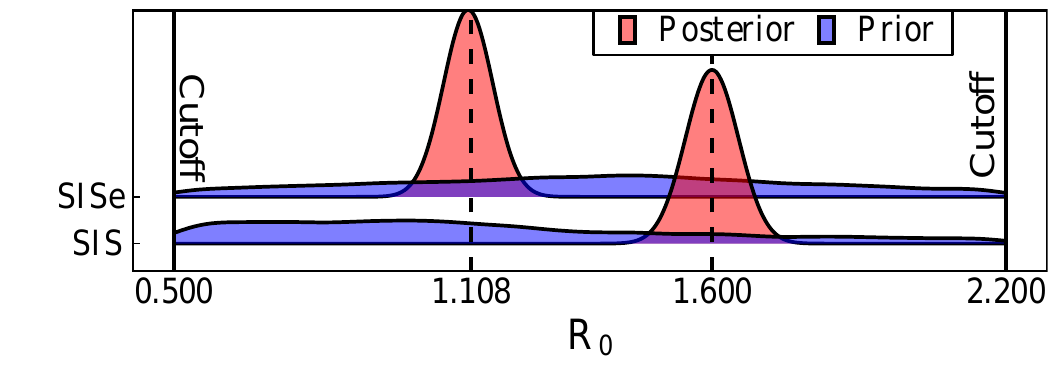}
  \begin{tabular}{l r r r r} \hline
    & $\beta$ & $\gamma$ & $\rho$  & $R_0$ \\ \hline
    SIS$_\text{E}$& 0.61    & 0.59      & 0.70   & 0.0086 \\
    SIS           & 0.089   & 0.093    & -     & 0.016 \\ \hline
  \end{tabular}
  \caption{\textit{Top:} posterior and prior distributions for $R_0$
    with the true values indicated. \textit{Bottom:} recorded QCD
    concentration factors for all the parameters (see text for
    details).}
  \label{fig:SIS_network}
\end{figure}


\section{Conclusions}
\label{sec:conclusions}

\updated{Throughout this work we have employed the Ornstein-Uhlenbeck
  process as a meta-model of more involved epidemiological models. We
  indicated in \S\ref{sec:UnfilteredState} a convergence analysis of
  the Bayesian posterior under direct process observations. Since this
  is an unrealistic setup in most epidemiological applications, one
  can think of these results as \emph{best possible}.}

\updated{We next took the opposite standpoint and considered data to
  be severely filtered such that, literally, each data point
  contributed only a single bit of information. For instance, this
  could be a model of pooled data obtained through environmental
  sampling and subsequent analysis. To obtain a closed framework we
  added a fairly general stationary $p$-step Markov assumption and
  worked out conditions on the data to obtain a non-singular inverse
  problem.}

\updated{With increasing compute power and improving possibilities for
  gathering data, fully Bayesian first-principle epidemiological
  models can be realized. As a minimum standard, we propose, such
  methods should be preceded by a proof of self-consistency: data
  generated from the model itself and chosen ``nearby'' the actual
  data should allow for accurate parameter identifiability. Under this
  basic standard, Bayesian epidemiological modeling with both
  short-term prediction and generation of forecasting scenarios can be
  included as an integrated part of the public health's methodological
  arsenal.}


\if\ams1

\section*{Acknowledgment}

This work was financially supported by the ENS Paris (S.~Bronstein)
and in part by the Swedish Research Council Formas (S.~Engblom) and
the Swedish Innovation Agency Vinnova (S.~Engblom, R.~Marin). Several
helpful comments and kind suggestions on earlier versions of the
manuscript were offered by friends and colleagues. These helped to
improve the presentation considerably.

\newcommand{\doi}[1]{\href{http://dx.doi.org/#1}{doi:#1}}
\bibliographystyle{abbrvnat}
\bibliography{MainBib}

\else

\section*{Acknowledgment}

This work was financially supported by the ENS Paris (S.~Bronstein)
and in part by the Swedish Research Council Formas (S.~Engblom) and
the Swedish Innovation Agency Vinnova (S.~Engblom, R.~Marin). Several
helpful comments and kind suggestions on earlier versions of the
manuscript were offered by friends and colleagues. These helped to
improve the presentation considerably.

\section*{Conflict of interest}

The authors declare there is no conflict of interest.

\bibliography{MainBib}

\fi

\end{document}